\theoremstyle{plain}
\newtheorem{theorem}{Theorem}[section]
\newtheorem{lemma}[theorem]{Lemma}
\newtheorem{corollary}[theorem]{Corollary}
\newtheorem{proposition}[theorem]{Proposition}
\theoremstyle{definition}
\newtheorem{definition}[theorem]{Definition}
\newtheorem{remark}[theorem]{Remark}
\newtheorem*{remark*}{Remark}
\numberwithin{equation}{section}
\renewcommand{\d}{\textnormal{d}}
\newcommand{\dx} {\,\mathrm{d}x}
\newcommand{\dy} {\,\mathrm{d}y}
\newcommand{\R}{\mathbb{R}}
\newcommand{\Rd}{\mathbb{R}^d}
\newcommand{\sign}{\text{sign}}
\newcommand{\eps}{\varepsilon}
\title[Harnack inequality for nonlocal problems with non-standard growth]{Harnack inequality for nonlocal problems with non-standard growth}
\author{Jamil Chaker}
\address{Fakult\"at f\"ur Mathematik, Universit\"at Bielefeld, 33615 Bielefeld, Germany}
\email{jchaker@math.uni-bielefeld.de}
\author{Minhyun Kim}
\address{Fakult\"at f\"ur Mathematik, Universit\"at Bielefeld, 33615 Bielefeld, Germany}
\email{minhyun.kim@uni-bielefeld.de}
\author{Marvin Weidner}
\address{Fakult\"at f\"ur Mathematik, Universit\"at Bielefeld, 33615 Bielefeld, Germany}
\email{mweidner@math.uni-bielefeld.de}
\subjclass[2020]{35B65, 47G20, 35D30, 35B45, 35A15}
\keywords{Harnack inequality, local boundedness, nonlocal problem, non-standard growth, minimizer, weak solution, De Giorgi class}
\thanks{Jamil Chaker gratefully acknowledges funding by the Deutsche Forschungsgemeinschaft (SFB 1283/2 2021 - 317210226). Minhyun Kim and Marvin Weidner gratefully acknowledge funding by the Deutsche Forschungsgemeinschaft (GRK 2235/2 2021 - 282638148).}
\begin{document}

\begin{abstract}
We prove a full Harnack inequality for local minimizers, as well as weak solutions to nonlocal problems with non-standard growth.
The main auxiliary results are local boundedness and a weak Harnack inequality for functions in a corresponding De Giorgi class.
This paper builds upon a recent work on regularity estimates for such nonlocal problems by the same authors.
\end{abstract}

\maketitle

\section{Introduction} \label{sec:introduction}
The goal of the present work is to prove a full Harnack inequality for local minimizers and weak solutions to a class of nonlocal problems which exhibit non-standard growth. 
This article builds upon the recent paper \cite{ChKiWe21}, in which we study regularity properties for local minimizers of nonlocal energy functionals, as well as weak solutions to nonlocal equations with non-standard growth. We prove that these objects satisfy a suitable fractional Caccioppoli inequality and therefore belong to corresponding De Giorgi classes. In this work, we show that any function in such De Giorgi class satisfies a full Harnack inequality.
As a consequence, we obtain the full Harnack inequality for local minimizers and weak solutions. 

Before we state the main result of this paper, let us formulate the main assumptions and briefly present the energy functionals respectively the nonlocal operators considered in this work. We point out that the setup of this article is in align with \cite{ChKiWe21}.

Let $\Omega \subset \R^d$ be open, $s \in (0,1)$ and $1 \leq p \leq q$. 
Throughout the paper, let $f: [0, \infty) \to [0, \infty)$ be convex, strictly increasing and differentiable with $f(0)=0$ and $f(1)=1$.
We say that $f$ satisfies \eqref{eq:pq} if for all $t \geq 0$:
\begin{subequations}
\makeatletter
\def\@currentlabel{$f_{p}^{q}$}
\makeatother
\label{eq:pq}
\begin{align}
p f(t) \leq ~&t f'(t), \label{eq:pq-lower} \tag{$f_{p}$} \\
& t f'(t) \le qf(t). \label{eq:pq-upper} \tag{$f^{q}$}
\end{align}
\end{subequations}
The growth function $f$ is naturally associated with nonlocal energy functionals and nonlocal operators. On the one hand, consider
\begin{equation} \label{eq:nonlocalfunctional}
u \mapsto \mathcal{I}_f(u) = (1-s)\iint_{(\Omega^c \times \Omega^c)^c} f\left( \frac{\vert u(x) - u(y) \vert}{\vert x-y \vert^s} \right) \frac{k(x,y)}{|x-y|^{d}} \dy \dx,
\end{equation}
where $k: \Rd \times \Rd \to \R$ is a measurable function satisfying
\begin{equation} \label{eq:k} \tag{$k$}
k(x,y) = k(y,x) \quad\text{and}\quad \Lambda^{-1} \leq k(x,y) \leq \Lambda \quad\text{for a.e. } x, y \in \Rd
\end{equation}
for some $\Lambda \ge 1$. 
In \cite[Theorem 6.2]{ChKiWe21}, we prove that local minimizers of $\mathcal{I}_f$ belong to the De Giorgi class $G(\Omega; q, c, s, f)$ for some constant $c = c(d,q,\Lambda) > 0$ if $f$ satisfies \eqref{eq:pq-upper} for some $q > 1$. For the precise definition of the De Giorgi class, see \Cref{def:DG}. 

On the other hand, we consider weak solutions to 
\begin{equation} \label{eq:nonlocalequation}
\mathcal{L}_{h}u = 0 \quad\text{in } \Omega,
\end{equation}
where $\mathcal{L}_{h}$ is a nonlocal operator of the form
\begin{equation*}
\mathcal{L}_{h}u(x) = (1-s) \text{p.v.} \int_{\Rd} h\left( x, y, \frac{u(x)-u(y)}{|x-y|^s}\right) \frac{\dy}{|x-y|^{d+s}}.
\end{equation*}
Here, $h : \Rd \times \Rd \times \R \to \R$ is a measurable function satisfying the structure condition
\begin{equation}\label{eq:h} \tag{$h$}
h(x, y, t) = h(y, x, t), \quad \sign(t)\frac{1}{\Lambda} f'(|t|) \leq h(x,y,t) \leq \Lambda f'(|t|)
\end{equation}
for a.e. $x, y \in \Rd$ and for all $t \in \R$. We show in \cite[Theorem 7.3]{ChKiWe21} that weak solutions to \eqref{eq:nonlocalequation} are in $G(\Omega; q, c, s, f)$ for some constant $c = c(d,q,\Lambda) > 0$ if \eqref{eq:pq-upper} holds true for $q > 1$.

The goal of this article is to prove a full Harnack inequality of the following form.
\begin{theorem}
\label{thm:harnack}
Let $\Omega$ be an open subset in $\Rd$. Let $0 < s_0 \leq s < 1$, $1 < p \leq q$, $c > 0$ and assume that $f$ satisfies \eqref{eq:pq}. There exists a constant $C > 0$ such that if $u \in G(\Omega; q, c, s, f)$ is nonnegative in $B_{R}(x_0) \subset \Omega$, then
\begin{equation}
\label{eq:harnack}
\sup_{B_{R/2}(x_0)} u \leq C \left( \inf_{B_{R/2}(x_0)} u + \mathrm{Tail}_{f'}(u_-; x_0, R) \right).
\end{equation}
The constant $C$ depends only on $d$, $s_0$, $p$, $q$ and $c$.
\end{theorem}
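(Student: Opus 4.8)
My plan is to follow the De~Giorgi--Nash--Moser scheme, which, since members of $G(\Omega;q,c,s,f)$ satisfy a two--sided energy inequality and hence behave simultaneously like sub-- and supersolutions, decomposes \eqref{eq:harnack} into two independent estimates that are afterwards chained: an $L^\infty$ (local boundedness) bound for $u$ from above, and a \emph{weak Harnack} bound for $u$ from below. Before either can be run, I would record the purely algebraic consequences of \eqref{eq:pq}: the two--sided power bounds $\min\{\lambda^p,\lambda^q\}\,f(t)\le f(\lambda t)\le\max\{\lambda^p,\lambda^q\}\,f(t)$, the almost monotonicity of $t\mapsto f(t)t^{-p}$ and $t\mapsto f(t)t^{-q}$, the comparability $tf'(t)\simeq f(t)$, and the corresponding power bounds for $f^{-1}$ and for $(f')^{-1}$, the last of which governs $\mathrm{Tail}_{f'}$. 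These are what force all constants below to depend only on $d,s_0,p,q,c$ and not on $f$ itself, which matters because the problem is genuinely non--homogeneous: one cannot normalise $u$, so $f$ must be carried through every inequality.

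For the local boundedness step I would apply the defining De~Giorgi inequality of $G(\Omega;q,c,s,f)$ to the truncations $(u-k_j)_+$ on a shrinking sequence of balls between $B_{R/2}(x_0)$ and $B_{3R/4}(x_0)$, with levels $k_j\uparrow k_\infty$, and iterate the resulting fast--decay recursion. Its nonlocal contribution is a tail of $u_+$, so the outcome is, for every $\delta\in(0,1)$,
\[
\sup_{B_{R/2}(x_0)}u\;\le\;\delta\,\mathrm{Tail}_{f'}(u_+;x_0,R/2)\;+\;C\,\delta^{-\gamma}\Bigl(\fint_{B_{3R/4}(x_0)}u_+^{\,\eps_0}\,\dx\Bigr)^{1/\eps_0},
\]
where the replacement of the a~priori larger exponent by an arbitrarily small $\eps_0>0$ is the standard interpolation (via Young's inequality and the power bounds above) combined with an iteration over a chain of intermediate balls.

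For the infimum half I would first derive a logarithmic estimate: testing the De~Giorgi inequality on $B_{3R/4}(x_0)$ against (the analogue of) $\log(u+d)$ with $d\simeq\mathrm{Tail}_{f'}(u_-;x_0,R)$ to absorb the negative part, one gets a BMO--type bound for $\log(u+d)$ on all balls contained in $B_{R/2}(x_0)$, uniform in the radius. From here I would obtain the weak Harnack inequality
\[
\Bigl(\fint_{B_{3R/4}(x_0)}u^{\,\eps_0}\,\dx\Bigr)^{1/\eps_0}\;\le\;C\Bigl(\inf_{B_{R/2}(x_0)}u+\mathrm{Tail}_{f'}(u_-;x_0,R)\Bigr),
\]
either through a John--Nirenberg argument or, more robustly for the nonlocal operator, through an expansion--of--positivity iteration (a measure--to--pointwise lemma upgraded across dyadic scales), shrinking $\eps_0$ if necessary so that it agrees with the exponent in the boundedness estimate; the term $\mathrm{Tail}_{f'}(u_-;x_0,R)$ enters precisely because the truncations in the De~Giorgi inequality are not compactly supported. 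The remaining, genuinely nonlocal, ingredient is a tail estimate
\[
\mathrm{Tail}_{f'}(u_+;x_0,R/2)\;\le\;C\Bigl(\inf_{B_{R/2}(x_0)}u+\mathrm{Tail}_{f'}(u_-;x_0,R)\Bigr),
\]
which I would prove by iterating the weak Harnack along a chain of balls: large positive values of $u$ in the exterior are propagated into $B_{R/2}(x_0)$ by the nonlocal structure and are therefore seen by $\inf_{B_{R/2}(x_0)}u$, while the part of the tail that only involves the sign of $u$ on $B_R(x_0)$ is folded into $\mathrm{Tail}_{f'}(u_-;x_0,R)$.

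Combining the three displays and choosing $\delta$ small enough to absorb the $u_+$-tail on the left gives \eqref{eq:harnack}. I expect the main obstacle to be not the abstract scheme but executing the De~Giorgi iteration, the logarithmic estimate and the John--Nirenberg / expansion--of--positivity step with the general growth function $f$ in place of a pure power, and in particular keeping the small exponents $\eps_0$ from the boundedness and the weak Harnack estimates consistent; the most delicate recurring point is matching the energy density $f$ against the tail density $f'$, which scale with different and $t$--dependent exponents between $p-1$ and $q-1$.
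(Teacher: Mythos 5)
Your overall scheme --- local boundedness, a weak Harnack inequality, a tail estimate, and an absorption to close --- is exactly the paper's; the growth lemma underpinning the weak Harnack is cited from the companion paper and the measure decay is obtained by a Krylov--Safonov covering argument rather than via a logarithmic/John--Nirenberg detour, but that is a standard alternative within the De~Giorgi--Nash--Moser family and not a conceptual gap.

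There are, however, two places where what you write would not close as stated. First, and most importantly, you formulate both the local boundedness and the weak Harnack estimate in terms of raw Lebesgue averages $\bigl(\fint u^{\eps_0}\bigr)^{1/\eps_0}$. For a pure power $f(t)=t^p$ this is equivalent to an $f^{\eps}$-average, but for a general $(p,q)$-growth function it is not: converting the $f$-density coming out of the Caccioppoli/De Giorgi inequality into an $L^{\eps_0}$-norm and back costs a factor that is a \emph{power} $q/p$, not a constant, and this is exactly the defect in \cite{FaZh22} that the paper is designed to remove. The paper keeps the growth function throughout: local boundedness reads
$f^{\eps}(\sup u/R^s)\le C\delta^{-\mu}\fint f^{\eps}(u_+/R^s)+\delta f^{\eps}(\mathrm{Tail}_{f'}(u_+)/(R/2)^s)$
and the weak Harnack reads $\fint f^{\eps}(u/R^s)\le C f^{\eps}(\inf u/R^s)+C f^{\eps}(\mathrm{Tail}_{f'}(u_-)/R^s)$, and the weak Harnack exponent $\eps$ is then the one fed into the localisation step. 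You notice that ``$f$ must be carried through every inequality'' but then drop it in the key displays; with them in $L^{\eps_0}$-form the final chaining gives $\sup u\le C(\inf u+\mathrm{Tail}_{f'}(u_-))^{q/p}$-type bounds, not \eqref{eq:harnack}.

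Second, the tail estimate. You propose $\mathrm{Tail}_{f'}(u_+;x_0,R/2)\le C(\inf_{B_{R/2}}u+\mathrm{Tail}_{f'}(u_-;x_0,R))$ and plan to obtain it by ``iterating the weak Harnack along a chain of balls.'' This is stronger than what is needed and, as sketched, does not go through: the tail integrates over all of $\R^d\setminus B_{R/2}(x_0)$ while any finite chain of balls stays inside $\Omega$, and the weak Harnack propagates \emph{lower} bounds inward, not upper bounds on exterior mass. What the paper proves, and all the scheme requires, is the weaker $\mathrm{Tail}_{f'}(u_+;x_0,R/2)\le C(\sup_{B_{R/2}}u+\mathrm{Tail}_{f'}(u_-;x_0,R/2))$. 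This comes from a single direct application of the $G_-$ energy inequality to $w=u-2M$ with $M=\sup_{B_{R/2}}u$: the cross term in the Caccioppoli inequality pairs the (large) negative part $w_-\simeq M$ on the inner ball against $f'$ of the positive exterior values, giving precisely the $u_+$-tail on the left, while the right-hand side is controlled by $f'(M/R^s)$ and the $u_-$-tail. The appearance of $\sup u$ rather than $\inf u$ is then harmless, because this term multiplied by $\delta$ can be absorbed into the left-hand side of the local boundedness bound; that is the absorption you mention, but it only makes sense with $\sup$ in the tail estimate.
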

The Harnack inequality was originally proved for harmonic functions and later obtained for several elliptic and parabolic local operators. It is known to have important consequences such as a priori estimates in H\"older spaces or convergence theorems. 
Therefore, it plays an important role in several mathematical fields such as geometric analysis, probability or analysis of partial differential equations. For an introduction to Harnack inequalities, their history and consequences, we refer the reader to the article by Kassmann \cite{KassmannHarnack}. 
The appearance of the tail term on the right-hand side of \eqref{eq:harnack} is a purely nonlocal phenomenon.
It is shown in \cite{Ka07} that the classical Harnack inequality fails for $s$-harmonic functions if nonnegativity of the function is assumed in the solution domain only. In \cite{Ka11}, a new formulation of the Harnack inequality is introduced. It involves a nonlocal tail term as in \eqref{eq:harnack} which captures the negative values of the $s$-harmonic function outside the solution domain.
In our setup the nonlocal tail is of the following form
\[ \mathrm{Tail}_{f'}(u; x_0, R) = R^s (f')^{-1} \left( (1-s)R^s \int_{\Rd \setminus B_R(x_0)} f' \left( \frac{|u(y)|}{|y-x_0|^s} \right) \frac{\d y}{|y-x_0|^{d+s}} \right),\]
see \Cref{subsec:functionspacesdegiorgi} for details.\\
Further important contributions to investigation of Harnack inequalities for nonlocal operators are, among others, the articles \cite{BaLe02, ChKu03, SoVo04, BaKa05, BoSz05, CaSi07, CaSi09, BaBaChKa09,  St13, DiKuPa14, Coz17, St19, ChKuWa19} and the references therein. 

Since local minimizers of \eqref{eq:nonlocalfunctional} belong to the De Giorgi class, we have the following corollary of \Cref{thm:harnack}, that is the full Harnack inequality for local minimizers.
\begin{corollary}
Let $s_0 \in (0,1)$, $1 < p \le q$, $\Lambda \geq 1$ and assume $s \in [s_0, 1)$. Assume that $f$ satisfies \eqref{eq:pq} and let $k: \Rd \times \Rd \to \R$ be a measurable function satisfying \eqref{eq:k}. There exists a constant $C > 0$, depending only on $d$, $s_0$, $p$, $q$ and $\Lambda$, such that if $u \in V^{s,f}(\Omega | \Rd)$ is a local minimizer of \eqref{eq:nonlocalfunctional} that is nonnegative in $B_{R}(x_0) \subset \Omega$, then the full Harnack inequality \eqref{eq:harnack} holds true for $u$.
\end{corollary}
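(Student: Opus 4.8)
The plan is to obtain the corollary as an immediate consequence of \Cref{thm:harnack}; the only substantive work is to check that a local minimizer of $\mathcal{I}_f$ lies in the appropriate De Giorgi class with constants of the asserted dependence. First I would invoke \cite[Theorem 6.2]{ChKiWe21}: since $1 < p \le q$ forces $q > 1$, the growth condition \eqref{eq:pq-upper} holds with exponent $q > 1$, and hence every local minimizer $u \in V^{s,f}(\Omega | \Rd)$ of the functional $\mathcal{I}_f$ in \eqref{eq:nonlocalfunctional} belongs to the De Giorgi class $G(\Omega; q, c, s, f)$ of \Cref{def:DG}, for some constant $c = c(d,q,\Lambda) > 0$. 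This is the step in which the structural assumption \eqref{eq:k} on the kernel $k$ is used (through the fractional Caccioppoli inequality of \cite{ChKiWe21}); once $u$ is placed in the De Giorgi class the kernel plays no further role.

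Next, with $u \in G(\Omega; q, c, s, f)$ nonnegative in $B_R(x_0) \subset \Omega$ and $s \in [s_0,1)$, I would apply \Cref{thm:harnack} verbatim to $u$. This yields the Harnack inequality \eqref{eq:harnack} with a constant $C$ depending only on $d$, $s_0$, $p$, $q$ and $c$. Substituting the dependence $c = c(d,q,\Lambda)$ obtained in the previous step, the final constant depends only on $d$, $s_0$, $p$, $q$ and $\Lambda$, exactly as claimed in the statement.

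The one point that still has to be addressed is that the right-hand side of \eqref{eq:harnack} be meaningful, i.e.\ that $\mathrm{Tail}_{f'}(u_-; x_0, R) < \infty$. This is guaranteed by the hypothesis $u \in V^{s,f}(\Omega | \Rd)$, which encodes precisely the weighted integrability of $f'\!\left(|u(y)|/|y-x_0|^s\right)$ over $\Rd \setminus B_R(x_0)$ needed for the tail to be finite (see \Cref{subsec:functionspacesdegiorgi}). Since there is no genuine analytic obstacle here, the ``hard part'' is purely bookkeeping: tracking which hypotheses of \cite{ChKiWe21} and of \Cref{thm:harnack} are invoked, and verifying that the strict inequality $p > 1$ is needed only through \Cref{thm:harnack} (and not already in the Caccioppoli estimate underlying \cite[Theorem 6.2]{ChKiWe21}, which only requires $q > 1$).
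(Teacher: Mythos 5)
Your proposal is correct and matches the paper's (implicit) argument exactly: the paper derives the corollary by first invoking \cite[Theorem 6.2]{ChKiWe21} to place local minimizers of $\mathcal{I}_f$ in $G(\Omega; q, c, s, f)$ with $c = c(d,q,\Lambda)$, and then applying \Cref{thm:harnack}. Your bookkeeping of the constant dependence and the finiteness of the tail term is also consistent with the paper.
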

Another direct consequence of \Cref{thm:harnack}, together with the observation that weak solutions belong to the De Giorgi class, is the full Harnack inequality for weak solutions.
\begin{corollary}
Let $s_0 \in (0,1)$, $1 < p \le q$, $\Lambda \geq 1$ and assume $s \in [s_0, 1)$. Assume that $f$ satisfies \eqref{eq:pq} and let $h: \Rd \times \Rd \times \R \to \R$ be a measurable function satisfying \eqref{eq:h}. There exists a constant $C > 0$, depending only on $d$, $s_0$, $p$, $q$ and $\Lambda$, such that if $u \in V^{s,f}(\Omega | \Rd)$ is a weak solution to \eqref{eq:nonlocalequation} that is nonnegative in $B_{R}(x_0) \subset \Omega$, then  the full Harnack inequality \eqref{eq:harnack} holds true for $u$.
\end{corollary}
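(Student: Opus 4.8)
The plan is to deduce the corollary directly from \Cref{thm:harnack} once we know that local minimizers of $\mathcal{I}_f$ belong to the relevant De Giorgi class. Indeed, by \cite[Theorem 6.2]{ChKiWe21}, if $f$ satisfies \eqref{eq:pq-upper} for some $q>1$ and $u \in V^{s,f}(\Omega | \Rd)$ is a local minimizer of $\mathcal{I}_f$, then $u \in G(\Omega; q, c, s, f)$ for a constant $c = c(d,q,\Lambda) > 0$. Since $f$ is assumed to satisfy the full \eqref{eq:pq} with $1 < p \le q$, in particular \eqref{eq:pq-upper} holds, so this membership is available. The hypotheses of \Cref{thm:harnack} are then met: $s \in [s_0, 1)$, $1 < p \le q$, $f$ satisfies \eqref{eq:pq}, and $u \in G(\Omega; q, c, s, f)$ is nonnegative in $B_R(x_0) \subset \Omega$.

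Applying \Cref{thm:harnack} to $u$ yields
\begin{equation*}
\sup_{B_{R/2}(x_0)} u \leq C \left( \inf_{B_{R/2}(x_0)} u + \mathrm{Tail}_{f'}(u_-; x_0, R) \right),
\end{equation*}
where $C$ depends only on $d$, $s_0$, $p$, $q$ and $c$. Since $c = c(d,q,\Lambda)$, the constant $C$ ultimately depends only on $d$, $s_0$, $p$, $q$ and $\Lambda$, which is precisely the dependence asserted in the corollary. This proves the full Harnack inequality \eqref{eq:harnack} for $u$.

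The only genuinely delicate point is bookkeeping rather than mathematics: one must make sure that the constant $c$ produced by \cite[Theorem 6.2]{ChKiWe21} indeed depends only on the stated quantities, so that feeding it into the hypothesis ``$c > 0$'' of \Cref{thm:harnack} does not introduce any hidden dependence on $u$, $R$ or $x_0$; this is guaranteed by the cited theorem. The analogous corollary for weak solutions to \eqref{eq:nonlocalequation} follows verbatim, replacing \cite[Theorem 6.2]{ChKiWe21} with \cite[Theorem 7.3]{ChKiWe21}, which asserts the same De Giorgi class membership with $c = c(d,q,\Lambda)$ under the structure condition \eqref{eq:h} and \eqref{eq:pq-upper}.
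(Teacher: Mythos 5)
Your proof is correct and uses exactly the approach the paper intends (the corollary is an immediate consequence of \Cref{thm:harnack} together with the De Giorgi class membership). One organizational remark: the statement you were asked to prove is the corollary for \emph{weak solutions} to \eqref{eq:nonlocalequation}, yet you wrote out the argument for local minimizers (citing \cite[Theorem 6.2]{ChKiWe21}) and only mention the weak-solution case as an afterthought at the end; the citation that actually does the work here is \cite[Theorem 7.3]{ChKiWe21}, which should be front and center.
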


The proof of the main result \Cref{thm:harnack} follows from a weak Harnack inequality together with the local boundedness of functions in the De Giorgi class. Our approach roughly follows the ideas of Mascolo and Papi \cite{MascPapi96}, where they establish 
a Harnack inequality for minimizers of functionals with non-standard growth in the local case. 

We would like to point out that all results in the present paper are robust in the sense that the constants stay uniform as $s \to 1^{-}$, since they depend on $s_0$ and are independent of the actual order of differentiability $s$. Since the tail contribution vanishes as $s \to 1^{-}$, we recover purely local estimates in the limit case. Note that the nonlocal energy functional is known to converge to a local energy form as $s \to 1^{-}$, see for instance \cite{BoSa19, AlCiPi21}. 

The family of operators studied in this paper exhibits non-standard growth behavior. 
The regularity theory has been intensively studied in recent years. 
In \cite{ChKiWe21} and \cite{ByKiOk21}, local boundedness and local H\"older regularity are established for such operators using two different approaches and under slightly different conditions on the growth function. However, the condition \eqref{eq:pq} is the main assumption in both articles. For a deeper discussion on the literature about nonlocal operators with different types of non-standard growth behavior and their regularity theory, we refer the reader to the references given in those two articles. 
See also \cite{DePa19, GoDeSr20, BoSaVi20, ChKi21, Ok21, GiKuSr21b, ByOkSo21, FaZh21, GiKuSr22a} and the references therein.

Moreover, we want to point out that the local boundedness estimate in this paper, see \Cref{thm:locbdd}, significantly improves the previous result from \cite{ChKiWe21} and is proved without the condition $q<p^{\ast}$. Note that a similar result is established in \cite{ByKiOk21}.

Recently, Fang and Zhang have investigated Harnack inequalities for nonlocal operators with general growth \cite{FaZh22}. In comparison to our setup, they impose more restrictive structural assumptions on the growth function $f$. Similar to the approach in this article, they derive local boundedness and a tail estimate as in \Cref{thm:locbdd}, \Cref{lem:tail}, as well as a weak Harnack inequality. By combining these results, \cite{FaZh22} derive an upper estimate for $\sup u$ in terms of $\inf u$ and a nonlocal tail term. However, for $p < q$, this result is not optimal due to the appearance of an additional power $\iota = q/p$ in the Harnack inequality. In this article, we prove a different version of a weak Harnack inequality taking into account the growth function $f$, see \Cref{thm:inf}. This allows us to deduce a full Harnack inequality in the classical form \eqref{eq:harnack}.

\subsection*{Notation}
Throughout the paper, we will denote by $C > 0$ a universal constant, which may be different from line to line.

\subsection*{Outline}
This article is structured as follows. In \Cref{sec:preliminaries} we collect several auxiliary results for the growth functions under consideration and provide definitions of related function spaces and De Giorgi classes. \Cref{sec:locbdd} and \Cref{sec:WHI} are devoted to the proof of local boundedness and a weak Harnack inequality for functions $u$ in appropriate De Giorgi classes. Finally, the proof of the main result \Cref{thm:harnack} is provided in \Cref{sec:harnack}.

\section{Preliminaries} \label{sec:preliminaries}
This section contains several auxiliary results on the growth function $f$ and introduces the function spaces related to our setup.
\subsection{Properties of growth functions}

We collect several properties of growth functions $f: [0, \infty) \to [0, \infty)$ which were proved in \cite{ChKiWe21} and will be used in the course of this article. 
Recall that we will assume throughout this paper that $f$ is convex, strictly increasing and differentiable with $f(0) = 0$ and $f(1) = 1$.

\begin{lemma}\cite[Lemma 2.1]{ChKiWe21}
\label{lem:upper}
Let $q \geq 1$. Then the following are equivalent:
\begin{enumerate} [(i)]
\vspace{-0.2cm}
\item \eqref{eq:pq-upper},
\item $t \mapsto t^{-q}f(t)$ is decreasing,
\item $f(\lambda t) \le \lambda^q f(t)$ for all $\lambda \ge 1$,
\item $\lambda^q f(t) \le f(\lambda t)$ for all $\lambda \le 1$.
\end{enumerate}
\end{lemma}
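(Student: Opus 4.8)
The plan is to establish the cycle of implications $(i) \Rightarrow (ii) \Rightarrow (iii) \Rightarrow (iv) \Rightarrow (i)$, which gives the equivalence of all four conditions. Every step is elementary and uses only that $f$ is differentiable, strictly increasing, and satisfies $f(0) = 0$.

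For $(i) \Rightarrow (ii)$ I would differentiate $g(t) := t^{-q} f(t)$ on $(0,\infty)$ and observe
\[
g'(t) = -q\,t^{-q-1} f(t) + t^{-q} f'(t) = t^{-q-1}\bigl(t f'(t) - q f(t)\bigr) \le 0
\]
by \eqref{eq:pq-upper}, so $g$ is decreasing. For $(ii) \Rightarrow (iii)$, given $\lambda \ge 1$ and $t > 0$, monotonicity of $g$ at the points $\lambda t \ge t$ yields $(\lambda t)^{-q} f(\lambda t) \le t^{-q} f(t)$, i.e.\ $f(\lambda t) \le \lambda^q f(t)$; the case $t = 0$ is trivial since $f(0)=0$. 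For $(iii) \Rightarrow (iv)$ I would substitute: for $\mu \in (0,1]$ and $t \ge 0$, apply $(iii)$ with exponent $1/\mu \ge 1$ at the point $\mu t$ to obtain $f(t) \le \mu^{-q} f(\mu t)$, which rearranges to $\mu^q f(t) \le f(\mu t)$. (Running the same substitution backwards also gives $(iv) \Rightarrow (iii)$, though only the forward direction is needed.)

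The one step deserving care is $(iv) \Rightarrow (i)$, where the finite inequality must be turned into its infinitesimal counterpart. Fixing $t > 0$ and $\lambda \in (0,1)$, the inequality $\lambda^q f(t) \le f(\lambda t)$ rewrites as $f(\lambda t) - f(t) \ge (\lambda^q - 1) f(t)$; dividing both sides by $\lambda t - t < 0$ reverses the sign, so
\[
\frac{f(\lambda t) - f(t)}{\lambda t - t} \le \frac{f(t)}{t}\cdot\frac{\lambda^q - 1}{\lambda - 1}.
\]
Letting $\lambda \to 1^-$, the left-hand side tends to $f'(t)$ by differentiability and $\tfrac{\lambda^q - 1}{\lambda - 1} \to q$, which yields $t f'(t) \le q f(t)$; the case $t = 0$ holds since both sides vanish.

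I do not anticipate any genuine obstacle here: the only points to watch are the sign flip when dividing by the negative quantity $\lambda t - t$, identifying the one-sided limit of the difference quotient with $f'(t)$ (legitimate because $f$ is assumed differentiable), and handling $t=0$ separately throughout.
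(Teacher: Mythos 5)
The paper itself gives no proof of this lemma — it is cited directly from \cite[Lemma 2.1]{ChKiWe21} — so there is no in-paper argument to compare against. That said, your cycle $(i)\Rightarrow(ii)\Rightarrow(iii)\Rightarrow(iv)\Rightarrow(i)$ is correct, and it is the standard argument for equivalences of this type (differentiate $t^{-q}f(t)$ to pass from the differential inequality to monotonicity, scale to go between the two scaling inequalities, and recover the differential inequality from a one-sided difference quotient). Each step checks out, including the sign flip when dividing by $\lambda t - t < 0$ and the separate treatment of $t=0$; the only negligible omission is that in $(iii)\Rightarrow(iv)$ you restrict to $\mu\in(0,1]$, leaving $\mu=0$ unmentioned, but that case is trivial since both sides vanish.
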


\begin{lemma}\cite[Lemma 2.2]{ChKiWe21}
\label{lem:lower}
Let $p \geq 1$. Then the following are equivalent:
\begin{enumerate}[(i)]
\vspace{-0.2cm}
\item \eqref{eq:pq-lower},
\item $t \mapsto t^{-p}f(t)$ is increasing,
\item $\lambda^p f(t) \le f(\lambda t)$ for all $\lambda \ge 1$,
\item $f(\lambda t) \le \lambda^p f(t)$ for all $\lambda \le 1$.
\end{enumerate}
\end{lemma}

\begin{corollary}\cite[Corollary 2.4]{ChKiWe21}
Let $1 \leq p \leq q$. Assume that $f$ satisfies \eqref{eq:pq}. Then,
\begin{align}
\label{eq:derivativedoubling1}
&\frac{p}{q} \lambda^{p-1} f'(t) \leq f'(\lambda t) \leq \frac{q}{p}\lambda^{q-1} f'(t) \quad\text{for all } \lambda \geq 1, \\
\label{eq:derivativedoubling2}
&\frac{p}{q} \lambda^{q-1} f'(t) \leq f'(\lambda t) \leq \frac{q}{p}\lambda^{p-1} f'(t) \quad\text{for all } \lambda \leq 1, \\
\label{eq:der-subadd}
&\frac{1}{2}f'(t) + \frac{1}{2}f'(s) \le f'(t + s) \leq \frac{q}{p} 2^{q-1}(f'(t) + f'(s)) \quad\text{for all } t,s \geq 0.
\end{align}
\end{corollary}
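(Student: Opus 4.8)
The plan is to deduce all three estimates from the two-sided bound \eqref{eq:pq}, i.e.\ $pf(t)\le tf'(t)\le qf(t)$, together with the power-type comparison inequalities for $f$ itself recorded in \Cref{lem:upper} and \Cref{lem:lower}. The mechanism is the same in each case: to control $f'(\lambda t)$ one applies $pf\le(\cdot)f'\le qf$ at the point $\lambda t$, compares $f(\lambda t)$ with $f(t)$ via the monotonicity of $r\mapsto r^{-p}f(r)$ and $r\mapsto r^{-q}f(r)$, and then converts $f(t)$ back into $f'(t)$ using $pf\le(\cdot)f'\le qf$ at the point $t$.

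For \eqref{eq:derivativedoubling1}, fix $\lambda\ge1$ and $t>0$. The lower bound follows by chaining $f'(\lambda t)\ge \frac{pf(\lambda t)}{\lambda t}$ (from \eqref{eq:pq-lower} at $\lambda t$), $f(\lambda t)\ge\lambda^p f(t)$ (from \Cref{lem:lower}(iii)), and $f(t)\ge\frac{tf'(t)}{q}$ (from \eqref{eq:pq-upper} at $t$), giving $f'(\lambda t)\ge\frac{p}{q}\lambda^{p-1}f'(t)$. The upper bound is obtained symmetrically from $f'(\lambda t)\le\frac{qf(\lambda t)}{\lambda t}$, $f(\lambda t)\le\lambda^q f(t)$ (\Cref{lem:upper}(iii)) and $f(t)\le\frac{tf'(t)}{p}$. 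Then \eqref{eq:derivativedoubling2} follows by applying \eqref{eq:derivativedoubling1} with $\lambda^{-1}\ge1$ and base point $\lambda t$ (equivalently, by rerunning the same three-step argument with items (iv) of \Cref{lem:upper} and \Cref{lem:lower}); the case $t=0$ is trivial, either directly or by continuity of $f'$, which holds since $f$ is convex and differentiable.

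For \eqref{eq:der-subadd}, note first that $f'$ is nondecreasing by convexity of $f$, so $f'(t+s)\ge\max\{f'(t),f'(s)\}\ge\frac12 f'(t)+\frac12 f'(s)$, which is the lower bound. For the upper bound we may assume $t\ge s$ by symmetry of the right-hand side; then $t+s\le 2t$, and monotonicity of $f'$ together with \eqref{eq:derivativedoubling1} at $\lambda=2$ give $f'(t+s)\le f'(2t)\le\frac{q}{p}2^{q-1}f'(t)\le\frac{q}{p}2^{q-1}(f'(t)+f'(s))$. I do not expect a genuine obstacle here; the only points requiring attention are selecting the correct rescaling inequality from \Cref{lem:upper}/\Cref{lem:lower} on each side, so that the exponents $\lambda^{p-1}$ and $\lambda^{q-1}$ are not lost, and the harmless degenerate case $t=0$.
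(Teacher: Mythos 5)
Your argument is correct, and since the paper simply cites this corollary from \cite{ChKiWe21} without reproducing its proof, your reconstruction via the chain $pf(\lambda t)\le \lambda t\,f'(\lambda t)\le qf(\lambda t)$ combined with \Cref{lem:upper}(iii)/(iv) and \Cref{lem:lower}(iii)/(iv) is exactly the natural (and evidently intended) route. All three estimates, including the trick of reducing \eqref{eq:derivativedoubling2} to \eqref{eq:derivativedoubling1} with $\lambda^{-1}$ and base point $\lambda t$, and the use of monotonicity of $f'$ plus \eqref{eq:derivativedoubling1} at $\lambda=2$ for \eqref{eq:der-subadd}, check out.
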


\begin{lemma}\cite[Lemma 2.5]{ChKiWe21}
\label{lem:g-inv}
Let $c > 1$ and assume that for some $t,s > 0$ it holds that $f(t) \le c f(s)$. Then $t \le cs$.
\end{lemma}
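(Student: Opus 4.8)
The plan is to argue by contradiction, using only the convexity of $f$ together with the normalization $f(0) = 0$ and the fact that $f$ is strictly increasing; the hypotheses \eqref{eq:pq} are not needed here. Suppose, for contradiction, that $t > cs$. Since $f$ is strictly increasing, this gives $f(t) > f(cs)$, so it suffices to show $f(cs) \ge c f(s)$: combining the two would yield $f(t) > c f(s)$, contradicting the assumption $f(t) \le c f(s)$.

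To establish $f(cs) \ge c f(s)$, I would write $s$ as a convex combination of $cs$ and $0$. Since $c > 1$, we have $s = \tfrac{1}{c}(cs) + \bigl(1 - \tfrac{1}{c}\bigr)\cdot 0$, so convexity of $f$ and $f(0) = 0$ give
\[
f(s) \le \tfrac{1}{c} f(cs) + \bigl(1 - \tfrac{1}{c}\bigr) f(0) = \tfrac{1}{c} f(cs),
\]
i.e. $f(cs) \ge c f(s)$. Equivalently, one can invoke the standard fact that for a convex $f$ with $f(0) = 0$ the difference quotient $\tau \mapsto f(\tau)/\tau$ is nondecreasing on $(0,\infty)$, and apply it to $cs \ge s > 0$.

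There is essentially no obstacle in this argument; the only point requiring a little care is that the contradiction must come from a strict inequality, which is supplied by the strict monotonicity of $f$ (yielding $f(t) > f(cs)$ from $t > cs$). This is consistent with the conclusion being stated as the non-strict inequality $t \le cs$.
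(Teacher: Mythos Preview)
Your argument is correct: the convexity of $f$ together with $f(0)=0$ gives $f(cs)\ge cf(s)$ for $c>1$, and strict monotonicity then forces $t\le cs$. Note that the present paper does not actually prove this lemma but merely cites it from the companion work \cite[Lemma~2.5]{ChKiWe21}, so there is no in-paper proof to compare against; your self-contained argument is a clean and complete justification.
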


Note that under the assumptions on $f$, it does not necessarily follow that $f'$ is invertible. Throughout this article, we will work with the following generalized inverse of $f'$: 
\begin{equation}\label{eq:geninverse}
(f')^{-1}(y) = \inf \lbrace t: f'(t) \geq y \rbrace.
\end{equation}

We collect a few properties of $(f')^{-1}$. First, we recall a proposition from \cite{ChKiWe21}.

\begin{proposition}\cite[Proposition 3.1]{ChKiWe21}
It holds that
\begin{align}
\label{eq:f-finv}
&(f' \circ (f')^{-1})(y) \geq y \quad \text{for all } y \geq 0,\\
\label{eq:finv-f}
&((f')^{-1} \circ f')(t) \leq t \quad \text{for all } t \geq 0.
\end{align}
\end{proposition}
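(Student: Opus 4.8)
The two assertions are of quite different character, so I would treat them separately.

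The inequality \eqref{eq:finv-f} is essentially a tautology once one unwinds the definition \eqref{eq:geninverse}. Fix $t \ge 0$. Since trivially $f'(t) \ge f'(t)$, the point $t$ itself lies in the set $\{s \ge 0 : f'(s) \ge f'(t)\}$, and therefore $((f')^{-1}\circ f')(t) = \inf\{s \ge 0 : f'(s) \ge f'(t)\} \le t$. No property of $f$ beyond the definition of the generalized inverse is used here.

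The inequality \eqref{eq:f-finv} is where the structure of $f$ enters, through the fact that the infimum in \eqref{eq:geninverse} is actually attained. The plan is: first record that, $f$ being convex and differentiable on $[0,\infty)$, its derivative $f'$ is nondecreasing and coincides with the right derivative of $f$, hence $f'$ is right-continuous (indeed continuous, being monotone and possessing the Darboux property); and moreover $f'$ is unbounded, since by \Cref{lem:lower} one has $f(t) \ge t^p$ for $t \ge 1$ and then \eqref{eq:pq-lower} gives $f'(t) \ge p f(t)/t \ge p\,t^{p-1} \to \infty$. Now fix $y \ge 0$, put $A := \{t \ge 0 : f'(t) \ge y\}$ and $t_0 := (f')^{-1}(y) = \inf A$. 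If $y \le f'(0)$, then by monotonicity of $f'$ we have $A = [0,\infty)$, so $t_0 = 0$ and $f'(t_0) = f'(0) \ge y$. If $y > f'(0)$, then $A$ is nonempty (by unboundedness of $f'$) and bounded below, so $t_0 \in [0,\infty)$ is well defined; choosing $t_n \in A$ with $t_n \downarrow t_0$ and invoking right-continuity of $f'$ yields $f'(t_0) = \lim_n f'(t_n) \ge y$. In either case $(f'\circ (f')^{-1})(y) = f'(t_0) \ge y$, which is \eqref{eq:f-finv}.

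The only genuinely non-formal ingredient is thus the regularity of $f'$: one needs $f'$ to be (right-)continuous so that the superlevel-type set $A$ is closed and the infimum defining $(f')^{-1}$ is a minimum, together with the mild observation that $A$ is nonempty for every relevant $y$. Both are standard consequences of the standing hypotheses on $f$ and require no new ideas, so I would expect the write-up to be short; the remainder is merely bookkeeping with the definition \eqref{eq:geninverse}.
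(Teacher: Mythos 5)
Both parts are correct. For \eqref{eq:finv-f}, your observation that $t$ always lies in $\{s \ge 0: f'(s) \ge f'(t)\}$ is exactly the right (and only needed) remark. For \eqref{eq:f-finv}, the two ingredients you isolate — continuity of $f'$ (monotone plus Darboux), so that the superlevel set $A=\{t: f'(t)\ge y\}$ is closed and the infimum is attained, and unboundedness of $f'$, so that $A\neq\emptyset$ for every $y\ge 0$ — are precisely what is needed, and you derive both correctly from the standing hypotheses together with \eqref{eq:pq-lower}. The paper itself does not reproduce a proof of this proposition (it is cited from \cite{ChKiWe21}), so there is no in-text argument to compare against; your route is the natural one given the definition \eqref{eq:geninverse}.

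One small caveat worth recording: your unboundedness step genuinely uses $p>1$ (for $p=1$, \eqref{eq:pq-lower} and \Cref{lem:lower} only give $f'(t)\ge p\,t^{p-1}=1$ for $t\ge 1$, and $f'$ could stay bounded, in which case $(f')^{-1}(y)=+\infty$ for $y>\sup f'$ and \eqref{eq:f-finv} would fail under the convention $f'(+\infty)=\sup f'$). The proposition is stated here without an explicit $p>1$ hypothesis, but $p>1$ is in force throughout all the results that actually invoke $(f')^{-1}$, so your proof is valid in every context where the proposition is used; it is nevertheless worth being explicit that strict inequality $p>1$ is what makes the argument go through.
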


The following are simple consequences of the previous results.

\begin{lemma}
\label{lem:fprimeinvadd}
For every $t,s \ge 0$:
\begin{align*}
(f')^{-1}\left( \frac{t+s}{2} \right) \le (f')^{-1}(t) + (f')^{-1}(s).
\end{align*}
\end{lemma}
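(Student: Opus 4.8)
The plan is to reduce the claim to the definition of the generalized inverse \eqref{eq:geninverse} by exhibiting a point at which $f'$ is large enough. Set $a := (f')^{-1}(t)$ and $b := (f')^{-1}(s)$; these are nonnegative real numbers (for $y \geq 0$ the set $\{r : f'(r) \geq y\}$ is nonempty and bounded below by $0$, since $f$ is strictly increasing so $f' \geq 0$). By definition of $(f')^{-1}$ as an infimum, to prove $(f')^{-1}\big(\tfrac{t+s}{2}\big) \le a+b$ it suffices to show that $a+b$ belongs to the set $\{r : f'(r) \geq \tfrac{t+s}{2}\}$, i.e. that $f'(a+b) \geq \tfrac{t+s}{2}$.

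For this I would invoke the subadditivity bound \eqref{eq:der-subadd}, which gives
\[
f'(a+b) \;\geq\; \tfrac12 f'(a) + \tfrac12 f'(b) \;=\; \tfrac12\, f'\big((f')^{-1}(t)\big) + \tfrac12\, f'\big((f')^{-1}(s)\big).
\]
Now apply \eqref{eq:f-finv}, which says $f'\big((f')^{-1}(y)\big) \geq y$ for all $y \geq 0$, to each term on the right; this yields $f'(a+b) \geq \tfrac12 t + \tfrac12 s = \tfrac{t+s}{2}$, as desired. Combining with the previous paragraph finishes the proof.

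There is no real obstacle here — the only point requiring care is that, since $f'$ need not be invertible, one must argue directly from the infimum definition \eqref{eq:geninverse} rather than ``applying $(f')^{-1}$ to both sides''; the inequalities \eqref{eq:f-finv}–\eqref{eq:finv-f} are precisely the substitutes for the identities one would use in the invertible case. (As an aside, the weaker inequality with $\max(t,s)$ in place of $\tfrac{t+s}{2}$ would follow at once from the monotonicity of $y \mapsto (f')^{-1}(y)$, but the stated form with the factor $\tfrac12$ is exactly what \eqref{eq:der-subadd} delivers.)
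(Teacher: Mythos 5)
Your proof is correct and uses the same ingredients as the paper's argument: the one-sided estimates \eqref{eq:f-finv} and the left inequality in \eqref{eq:der-subadd}, together with the generalized-inverse structure. The only cosmetic difference is that you unfold the infimum definition \eqref{eq:geninverse} directly to conclude, whereas the paper chains the three inequalities through $(f')^{-1}$ using its monotonicity and \eqref{eq:finv-f} — these are two phrasings of the same argument.
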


\begin{proof}
By \eqref{eq:f-finv}, \eqref{eq:finv-f} and monotonicity of $(f')^{-1} $:
\begin{align*}
(f')^{-1} \left[ \frac{t+s}{2} \right] &\le (f')^{-1} \left[ \frac{f'((f')^{-1}(t))+ f'((f')^{-1}(s))}{2} \right] \le (f')^{-1} \left[ f'\left((f')^{-1}(t)+(f')^{-1}(s)\right) \right]\\
&\le (f')^{-1}(t)+(f')^{-1}(s).
\end{align*}
\end{proof}

\begin{lemma}
\label{lem:fprimeinvdoubling}
Let $1 < p \le q$. Assume that $f$ satisfies \eqref{eq:pq}. Then
\begin{align*}
(f')^{-1}(\lambda t) \le c_\lambda (f')^{-1}(t) \quad \text{for all } \lambda \geq 0,
\end{align*}
where $c_{\lambda} = (q\lambda/p)^{1/(p-1)}$ if $\lambda \geq p/q$ and $c_{\lambda} = (q\lambda/p)^{1/(q-1)}$ if $\lambda \leq p/q$.
\end{lemma}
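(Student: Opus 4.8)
The plan is to transfer the doubling property of $f'$ recorded in \eqref{eq:derivativedoubling1} and \eqref{eq:derivativedoubling2} to the generalized inverse, by exploiting the elementary fact that, directly from the definition \eqref{eq:geninverse} as an infimum, an inequality $f'(\sigma) \ge y$ forces $(f')^{-1}(y) \le \sigma$. Concretely, I would set $s := (f')^{-1}(t)$, note that $f'(s) \ge t$ by \eqref{eq:f-finv}, and then search for a scaling factor $\mu$ with $f'(\mu s) \ge \lambda t$; once such a $\mu$ is produced, the principle above yields $(f')^{-1}(\lambda t) \le \mu s = \mu\,(f')^{-1}(t)$, and it only remains to verify that $\mu$ can be taken equal to $c_\lambda$.

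For $\lambda \ge p/q$ I would choose $\mu := (q\lambda/p)^{1/(p-1)}$, which is $\ge 1$ precisely under this restriction on $\lambda$, so that the lower bound in \eqref{eq:derivativedoubling1} applies and gives
\[
f'(\mu s) \ge \tfrac{p}{q}\mu^{p-1} f'(s) = \lambda f'(s) \ge \lambda t,
\]
the exponent having been arranged so that $\tfrac{p}{q}\mu^{p-1} = \lambda$. For $0 < \lambda \le p/q$ the natural choice is instead $\mu := (q\lambda/p)^{1/(q-1)}$, which is now $\le 1$, and the lower bound in \eqref{eq:derivativedoubling2} gives analogously
\[
f'(\mu s) \ge \tfrac{p}{q}\mu^{q-1} f'(s) = \lambda f'(s) \ge \lambda t.
\]
Both cases then conclude via the infimum characterization, and the degenerate cases $\lambda = 0$ or $t = 0$ are immediate since both sides of the claimed inequality vanish.

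I do not expect a genuine obstacle here: the argument is essentially bookkeeping between the two regimes $\mu \ge 1$ and $\mu \le 1$ of the doubling estimates, and the threshold $\lambda = p/q$ is exactly the place where $\tfrac{p}{q}\mu^{p-1}$ and $\tfrac{p}{q}\mu^{q-1}$ equal $1$. The only point deserving a remark is that the scaling manipulations require $s = (f')^{-1}(t) \in (0,\infty)$ for $t \in (0,\infty)$, i.e.\ that $f'(0)=0$ and $f'(\tau)\to\infty$ as $\tau\to\infty$; both are consequences of \eqref{eq:pq-lower} (the former using $p>1$), so this is where the hypothesis $p>1$ is used.
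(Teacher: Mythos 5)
Your proof is correct and takes essentially the same route as the paper: both first verify from \eqref{eq:derivativedoubling1}--\eqref{eq:derivativedoubling2} that $\lambda f'(s) \le f'(c_\lambda s)$, then transfer this through \eqref{eq:f-finv} and the generalized inverse. The paper packages the last step as a chain of inequalities using monotonicity of $(f')^{-1}$ and \eqref{eq:finv-f}, whereas you argue directly from the infimum definition \eqref{eq:geninverse}, which is an equivalent formulation of the same mechanism.
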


\begin{proof}
First, we observe that by \eqref{eq:derivativedoubling1} and \eqref{eq:derivativedoubling2}, $\lambda f'(t) \le f'(c_{\lambda} t)$. Therefore, using \eqref{eq:f-finv}, \eqref{eq:finv-f} and monotonicity of $(f')^{-1} $:
\begin{align*}
(f')^{-1}\left[\lambda t\right] \le (f')^{-1}\left[\lambda f'((f')^{-1}(t))\right] \le (f')^{-1}\left[ f'(c_{\lambda}(f')^{-1}(t))\right] \le c_{\lambda}(f')^{-1}(t).
\end{align*}
\end{proof}

\subsection{Function spaces and De Giorgi classes}\label{subsec:functionspacesdegiorgi}

Let $s \in (0, 1)$ and $\Omega \subset \Rd$ be open. We define the {\it Orlicz} and {\it Orlicz--Sobolev spaces} by
\begin{equation*}
\begin{split}
L^f(\Omega) &= \lbrace u: \Omega \to \R ~\text{measurable}: \Phi_{L^f(\Omega)}(u) < \infty \rbrace, \\
W^{s, f}(\Omega) &= \lbrace u \in L^f(\Omega): \Phi_{W^{s, f}(\Omega)}(u) < \infty \rbrace,\\
V^{s, f}(\Omega|\Rd) &= \lbrace u \in L^f(\Omega): \Phi_{V^{s, f}(\Omega)}(u) < \infty \rbrace,
\end{split}
\end{equation*}
where $\Phi_{L^f(\Omega)}$, $\Phi_{W^{s, f}(\Omega)}$ and $\Phi_{V^{s,f}(\Omega|\Rd)}$ are {\it modulars} defined by
\begin{equation*}
\begin{split}
\Phi_{L^f(\Omega)}(u) &= \int_{\Omega} f(|u(x)|) \dx, \\
\Phi_{W^{s,f}(\Omega)}(u) &= (1-s)\int_{\Omega} \int_{\Omega} f\left( \frac{|u(x)-u(y)|}{|x-y|^s} \right) \frac{\dy \dx}{\vert x-y \vert^{d}} ,\\
\Phi_{V^{s,f}(\Omega|\Rd)}(u) &= (1-s)\iint_{(\Omega^c \times \Omega^c)^c} f\left( \frac{|u(x)-u(y)|}{|x-y|^s} \right) \frac{\dy \dx}{\vert x-y \vert^{d}} .
\end{split}
\end{equation*}

Next, we define nonlocal tails, which capture the behavior of functions $u\in V^{s, f}(\Omega|\Rd)$ at large scales. 
We define the nonlocal $f'$-Tail by
\begin{equation}\label{eq:tail}
\mathrm{Tail}_{f'}(u; x_0, R) = R^s (f')^{-1} \left( (1-s)R^s \int_{\Rd \setminus B_R(x_0)} f' \left( \frac{|u(y)|}{|y-x_0|^s} \right) \frac{\d y}{|y-x_0|^{d+s}} \right).
\end{equation}
Recall that the function $f'$ does not have to be invertible. Here $(f')^{-1} $ denotes the generalized inverse, see \eqref{eq:geninverse}.
In our previous work \cite{ChKiWe21}, we prove that this expression is finite if $u\in V^{s, f}(\Omega|\Rd)$ for $B_R(x_0)\subset\Omega$.

We are now ready to provide the definition of De Giorgi classes.

\begin{definition}[De Giorgi classes] \label{def:DG}
Let $\Omega$ be an open subset in $\Rd$. Let $s \in (0,1)$, $q > 1$ and $c > 0$. We say that $u \in G_+(\Omega; q, c, s, f)$ if $u \in V^{s,f}(\Omega|\Rd)$ and if for every $x_0 \in \Omega$, $0 < r < R \le d(x_0, \partial \Omega)$ and $k \in \R$, it holds that
\begin{equation}
\label{eq:DG}
\begin{split}
&\Phi_{W^{s, f}(B_r(x_0))}(w_+) + (1-s) \int_{B_{r}(x_0)} \int_{A_{k}^{-}} f'\left( \frac{w_{-}(y)}{\vert x-y \vert^s} \right) \frac{w_{+}(x)}{\vert x-y \vert^s} \frac{\d y \dx}{\vert x-y \vert^{d}} \\
&\le c\left(\frac{R}{R-r}\right)^{q} \Phi_{L^f(B_R(x_0))}\left( \frac{w_{+}}{R^s} \right) \\
&\quad +c \left(\frac{R}{R-r} \right)^{d+sq} \Vert w_+ \Vert_{L^1(B_{R}(x_0))} (1-s) \int_{\Rd \setminus B_r(x_0)} f' \left( \frac{w_+(y)}{|y-x_0|^s} \right) \frac{\d y}{|y-x_0|^{d+s}},
\end{split}
\end{equation}
where $w_{\pm}(x) = (u(x)- k)_{\pm}$ and $A_{k}^{-} = \lbrace y \in \Rd: u(y) < k \rbrace$. We say that $u \in G_-(\Omega; q, c, s, f)$ if \eqref{eq:DG} holds with $w_+$, $w_-$ and $A_{k}^{-}$ replaced by $w_-$, $w_+$ and $A_{k}^{+} = \lbrace y \in \Rd: u(y) > k \rbrace$, respectively. Moreover, we denote by $G(\Omega; q, c, s, f) = G_+(\Omega; q, c, s, f) \cap G_-(\Omega; q, c, s, f)$.
\end{definition}

\section{Local boundedness} \label{sec:locbdd}

The goal of this section is to prove local boundedness of functions $u \in G_+(\Omega; q, c, s, f)$ under \eqref{eq:pq-upper}, see \Cref{thm:locbdd}. This result significantly improves \cite[Theorem 5.1]{ChKiWe21}. Let us mention that a similar estimate has been obtained in \cite{ByKiOk21} using a different technique based on a Poincar\'e--Sobolev-type inequality for nonlocal Orlicz--Sobolev spaces. Our proof solely relies on the classical fractional Sobolev embedding and our estimate is robust for $s \to 1^{-}$.

\begin{theorem}
\label{thm:locbdd}
Let $\Omega$ be an open subset in $\Rd$. Let $0 < s_0 \leq s < 1$, $q > 1$, $c > 0$ and assume that $f$ satisfies \eqref{eq:pq-upper}. There exists a constant $C > 0$ such that if $u \in G_+(\Omega; q, c, s, f)$, then for any $B_R(x_0) \subset \Omega$, $1/2 \le \rho < \tau \leq 1$ and $\delta \in (0,1)$, it holds that
\begin{equation*}
\begin{split}
f\left( \sup_{B_{\rho R}(x_0)} \frac{u_+}{R^s} \right)
&\leq C \frac{\delta^{(1-q)2d/s_0}}{(\tau-\rho)^{\gamma}} \fint_{B_{\tau R}(x_0)} f\left( \frac{u_+(x)}{R^s} \right) \dx + \delta f \left( \frac{\mathrm{Tail}_{f'}(u_+;x_0,R/2)}{(R/2)^s} \right),
\end{split}
\end{equation*}
where $\gamma = 2d(d+q)/s_0$. The constant $C$ depends only on $d$, $s_0$, $q$ and $c$.
\end{theorem}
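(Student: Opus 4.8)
The plan is a De Giorgi iteration based on the Caccioppoli-type inequality \eqref{eq:DG} and the fractional Sobolev embedding. After a translation we may assume $x_0=0$ and abbreviate $B_r=B_r(0)$. Fix $1/2\le\rho<\tau\le1$ and $\delta\in(0,1)$, and set $\rho_j:=\rho+2^{-j}(\tau-\rho)$, $B_j:=B_{\rho_j R}$, so that $B_0=B_{\tau R}$ and $\bigcap_j B_j=\overline{B_{\rho R}}$. Let $M>0$ be a free parameter, to be fixed at the end, put $k_j:=M(1-2^{-j})$, $w_j:=(u-k_j)_+$, and
\[
Y_j:=\fint_{B_j} f\!\left(\frac{w_j}{R^s}\right)\d x ,
\]
so that $Y_0=\fint_{B_{\tau R}} f(u_+/R^s)\,\d x$. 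The aim is a recursion forcing $Y_j\to0$ once $Y_0$ lies below an explicit threshold; since $k_j\to M$, this gives $u\le M$ a.e.\ in $B_{\rho R}$, and the threshold, solved for $M$, produces the asserted bound.

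To set up the recursion, observe first that on $B_{j+1}\cap\{u>k_{j+1}\}$ one has $w_j\ge k_{j+1}-k_j=2^{-j-1}M$, so by monotonicity of $f$ and \Cref{lem:upper}(iv),
\[
\frac{|B_{j+1}\cap\{u>k_{j+1}\}|}{|B_{j+1}|}\le\frac{C\,2^{jq}}{f(M/R^s)}\,Y_j .
\]
Next, apply \eqref{eq:DG} with outer radius $\rho_j R$, inner radius $\rho_{j+1}R$ and function $w_{j+1}$, discarding the nonnegative cross term; using $\rho_j/(\rho_j-\rho_{j+1})\le 2^{j+1}/(\tau-\rho)$, $\tfrac12\le\rho_j\le1$ and \Cref{lem:upper}(iii) this reads
\[
\Phi_{W^{s,f}(B_{j+1})}(w_{j+1})\le C\Big(\tfrac{2^{j+1}}{\tau-\rho}\Big)^{q}|B_j|\,Y_j + C\Big(\tfrac{2^{j+1}}{\tau-\rho}\Big)^{d+sq}\|w_{j+1}\|_{L^1(B_j)}\,\mathcal T_j ,
\]
where $\mathcal T_j=(1-s)\int_{\Rd\setminus B_{j+1}}f'(w_{j+1}(y)/|y|^s)|y|^{-d-s}\d y$. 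Since $\rho_{j+1}R\ge\rho R\ge R/2$ and $0\le w_{j+1}\le u_+$, monotonicity of $f'$ and of $(f')^{-1}$ together with the doubling property \Cref{lem:fprimeinvdoubling} (used to change the tail radius from $\rho_{j+1}R$ down to $R/2$) bound $\mathcal T_j$, up to a power of $2^j$, by a quantity controlled by $\mathrm{Tail}_{f'}(u_+;0,R/2)$; paired with $\|w_{j+1}\|_{L^1(B_j)}$ and then interpolated by means of $\delta$, this term eventually produces the summand $\delta\, f\big(\mathrm{Tail}_{f'}(u_+;0,R/2)/(R/2)^s\big)$.

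The heart of the argument is a gain of integrability. Splitting the Gagliardo integrand in $\Phi_{W^{s,f}(B_{j+1})}$ according to whether the difference quotient of $w_{j+1}/R^s$ is $\le1$ or $>1$ (where \Cref{lem:upper} gives $f(t)\ge t^q$, respectively $f(t)\ge t$), and truncating $w_{j+1}$ so that all quantities remain finite before an a priori sup bound is available, one compares $\Phi_{W^{s,f}(B_{j+1})}(w_{j+1})$ with a fractional Sobolev seminorm of such a truncation, and then invokes the embedding $W^{s,p}\hookrightarrow L^{p^{*}_{s}}$ in its $(1-s)$-normalized form — so that all constants remain uniform for $s\in[s_0,1)$ — to obtain a bound on $\big(\fint_{B_{j+1}}f(w_{j+1}/R^s)^{\chi}\big)^{1/\chi}$ for some $\chi=\chi(d,s_0)>1$. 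Combining this with Hölder's inequality on $B_{j+1}\cap\{u>k_{j+1}\}$ and the Chebyshev bound above yields a recursion
\[
Y_{j+1}\le C\,b^{\,j}\,(\tau-\rho)^{-\sigma}\,\frac{Y_j^{1+\beta}}{f(M/R^s)^{\beta}}\;+\;(\text{the }\delta\text{-tail summand}),
\]
with explicit $b=b(d,s_0,q)>1$, $\beta=\beta(d,s_0)>0$, $\sigma=\sigma(d,s_0,q)>0$. The standard fast-convergence lemma (if $Y_{j+1}\le \tilde C b^{j}Y_j^{1+\beta}$ and $Y_0\le \tilde C^{-1/\beta}b^{-1/\beta^2}$, then $Y_j\to0$) then applies; choosing $M$ so that $f(M/R^s)$ is just large enough for the smallness condition to hold — after absorbing the $\delta$-tail term — amounts to requiring $f(M/R^s)$ to be bounded by the right-hand side of the claimed estimate, and tracking the constants through $\chi\sim d/(d-s_0)$, $1/\beta\sim d/s_0$ and the exponent $d+sq$ in \eqref{eq:DG} gives the powers $\gamma=2d(d+q)/s_0$ and $\delta^{(1-q)2d/s_0}$.

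I expect the gain-of-integrability step to be the main obstacle: since only the upper growth \eqref{eq:pq-upper} is assumed and $f$ need not be coercive, one cannot directly dominate a fractional Sobolev seminorm by $\Phi_{W^{s,f}}(w_+)$, which is exactly why the two-regime splitting and the truncation of $w_+$ are needed. A secondary technical point is keeping every constant independent of the order $s$, which is handled by carrying the $(1-s)$ weight through the Sobolev inequality and all intermediate estimates, so that the bound is robust as $s\to1^-$.
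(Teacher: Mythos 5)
Your overall scaffolding (De Giorgi iteration, nested balls $B_j$, levels $k_j\to M$, Caccioppoli from \eqref{eq:DG}, splitting the tail term via a threshold on $k$) is the same as the paper's. The critical divergence is in the gain-of-integrability step, and there the proposal has a genuine gap.

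You attempt to compare $\Phi_{W^{s,f}(B_{j+1})}(w_{j+1})$ with a Gagliardo seminorm of $w_{j+1}$ itself (after truncation), by splitting the integrand according to whether $|w(x)-w(y)|/|x-y|^s$ is $\le 1$ or $>1$, and then invoking $W^{s,p}\hookrightarrow L^{p^*_s}$. This does not close for two reasons. First, the two-regime lower bound produces two different seminorms ($W^{s,q}$-type and $W^{s,1}$-type) each supported only on a portion of the product $B_{j+1}\times B_{j+1}$, so neither is the full Gagliardo seminorm that the Sobolev embedding requires, and there is no obvious way to glue the two partial bounds into a single embedding. Second, even if you obtained an $L^{p^*}$ bound on the truncation of $w_{j+1}$, the quantity you actually need to control in the recursion is $\fint_{B_{j+1}} f(w_{j+1}/R^s)^\chi$, and passing from a power-norm of $w$ back to an integral of $f(w/R^s)^\chi$ would require a lower growth bound $f(t)\gtrsim t^p$ — precisely the hypothesis \eqref{eq:pq-lower} that this theorem is careful not to use. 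In effect your route would reconstruct the earlier estimate \cite[Theorem 5.1]{ChKiWe21}, which required $q<p^*$, rather than the improved statement proved here.

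The paper avoids this entirely by applying the fractional Sobolev embedding $W^{\sigma,1}(B_{j+1})\hookrightarrow L^{d/(d-\sigma)}(B_{j+1})$ (for a carefully chosen $\sigma=\max\{s_0/2,(3s-1)/2\}<s$) directly to the \emph{composition} $F:=f(\tilde w_j/R^s)$, not to $\tilde w_j$. The $W^{\sigma,1}$ seminorm of $F$ is then bounded using only \eqref{eq:pq-upper}, via the pointwise estimate
\begin{equation*}
\frac{|f(a/R^s)-f(b/R^s)|}{|x-y|^\sigma}
\le q\Bigl(\max\bigl\{f(a/R^s),f(b/R^s)\bigr\}+f\bigl(\tfrac{|a-b|}{|x-y|^s}\bigr)\Bigr)R^{-s}|x-y|^{s-\sigma},
\end{equation*}
which splits $[F]_{W^{\sigma,1}}$ into a term controlled by $\int f(\tilde w_j/R^s)$ (after the $(1-\sigma)$-weighted kernel integral is absorbed using the relations \eqref{eq:sigma}) and a term controlled by $\Phi_{W^{s,f}}(\tilde w_j)$, which is exactly what the De Giorgi inequality \eqref{eq:DG} provides. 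No lower growth of $f$ is needed, no truncation is necessary, and the robustness as $s\to1^-$ is automatic from the choice of $\sigma$. This is the idea you would need; the two-regime splitting you propose does not substitute for it. A secondary point you omit is the use of the intermediate levels $\tilde k_j=(k_j+k_{j+1})/2$, which are needed so that the Caccioppoli estimate for $\tilde w_j$ can be used to control $w_{j+1}$ on $B_{j+1}$.
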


\begin{remark}
In particular, \Cref{thm:locbdd} implies that functions $u \in G_+(\Omega; q, c, s, f)$ are locally bounded from above in $\Omega$ under the assumptions of \Cref{thm:locbdd}. Local boundedness from below for functions $u \in G_-(\Omega; q, c, s, f)$ can be proved in the same way. Finiteness of $\mathrm{Tail}_{f'}(u_+;x_0,R/2)$ is a consequence of $u \in V^{s,f}(\Omega|\R^d)$, see \cite[Proposition 3.2]{ChKiWe21}.
\end{remark}

\begin{proof}
We may assume that $x_0=0$. For $j \geq 0$, we set
\begin{equation*}
\begin{split}
&R_j = \rho R+2^{-j}(\tau-\rho)R, \quad B_j = B_{R_j}, \\
&k_j = (1-2^{-j})k, \quad \tilde{k}_j = (k_j + k_{j+1})/2, \\
&w_j = (u-k_j)_+, \quad \tilde{w}_j = (u-\tilde{k}_j)_+,
\end{split}
\end{equation*}
where $k$ is a positive number that will be determined later. Note that $R_{j+1} < R_j \leq 2R_{j+1}$, $k_j \leq \tilde{k}_j \leq k_{j+1}$ and $w_{j+1} \leq \tilde{w}_j \leq w_j$. We denote by $A^+_{h, r}$ the set $\lbrace x \in B_r: u(x) > h \rbrace$.

Let $\sigma = \max \lbrace s_0/2, (3s-1)/2 \rbrace \in (0, s)$. Then, it is easy to check that
\begin{equation} \label{eq:sigma}
1-\sigma \leq \frac{3}{2}(1-s) \quad\text{and}\quad s-\sigma \geq \min\lbrace s_0/2, (1-\sigma)/3 \rbrace.
\end{equation}
First, by H\"older's inequality we have
\begin{equation*}
\begin{split}
\fint_{B_{j+1}} f\left( \frac{w_{j+1}(x)}{R^s} \right) \dx
&\leq \frac{1}{|B_{j+1}|} \int_{A_{\tilde{k}_j, R_{j+1}}^+} f\left(\frac{\tilde{w}_j(x)}{R^s}\right) \dx \\
&\leq \frac{|A_{\tilde{k}_j, R_{j+1}}^+|^{\frac{\sigma}{d}}}{2^{-d} |B_j|} \left( \int_{B_{j+1}} f \left( \frac{\tilde{w}_j(x)}{R^s} \right)^{\frac{d}{d-\sigma}} \dx \right)^{\frac{d-\sigma}{d}}.
\end{split}
\end{equation*}
By applying the fractional Sobolev inequality to $\tilde{w}_j/R^s$ in $B_{j+1}$, we estimate
\begin{equation*}
\begin{split}
\left( \int_{B_{j+1}} f \left( \frac{\tilde{w}_j(x)}{R^s} \right)^{\frac{d}{d-\sigma}} \dx \right)^{\frac{d-\sigma}{d}} 
&\leq C (1-\sigma) \int_{B_{j+1}} \int_{B_{j+1}} \frac{|f(\tilde{w}_j(x)/R^s) - f(\tilde{w}_j(y)/R^s)|}{|x-y|^{d+\sigma}} \dy \dx \\
&\quad + C R_{j+1}^{-\sigma} \int_{B_{j+1}} f\left( \frac{\tilde{w}_j(x)}{R^s} \right) \dx.
\end{split}
\end{equation*}
Note that from monotonicity of $f'$ and assumption \eqref{eq:pq-upper} it follows:
\begin{equation*}
\begin{split}
\frac{|f(a/R^s)-f(b/R^s)|}{|x-y|^{\sigma}}
&\leq \max \left\lbrace f'\left( \frac{a}{R^s}\right), f'\left( \frac{b}{R^s}\right) \right\rbrace \frac{|a-b|}{|x-y|^s} R^{-s} |x-y|^{s-\sigma} \\ 
&\leq q \left( \max \left\lbrace f\left( \frac{a}{R^s}\right), f\left( \frac{b}{R^s}\right) \right\rbrace + f\left( \frac{|a-b|}{|x-y|^s} \right) \right) R^{-s} |x-y|^{s-\sigma}
\end{split}
\end{equation*}
for any $a, b \in \mathbb{R}$. This inequality applied to $a=\tilde{w}_j(x)$, $b=\tilde{w}_j(y)$, together with \eqref{eq:sigma}, yields
\begin{equation*}
\begin{split}
&(1-\sigma) \int_{B_{j+1}} \int_{B_{j+1}} \frac{|f(\tilde{w}_j(x)/R^s) - f(\tilde{w}_j(y)/R^s)|}{|x-y|^{d+\sigma}} \dy \dx \\
&\leq C(1-\sigma) \int_{B_{j}} f\left( \frac{\tilde{w}_j(x)}{R^s}\right) \int_{B_{j}} \frac{R^{-s} |x-y|^{s-\sigma}}{|x-y|^{d}} \dy \dx + C R^{-s} R_{j+1}^{s-\sigma} \Phi_{W^{s, f}(B_{j+1})}(\tilde{w}_j) \\
&\leq \frac{C}{R_j^{\sigma}} \left( |B_j| \fint_{B_j} f\left( \frac{w_j(x)}{R^s} \right) \dx + \Phi_{W^{s, f}(B_{j+1})}(\tilde{w}_j) \right).
\end{split}
\end{equation*}
We have thus far obtained
\begin{equation} \label{eq:locbdd1}
\fint_{B_{j+1}} f\left( \frac{w_{j+1}(x)}{R^s} \right) \dx \leq C \left( \frac{|A_{\tilde{k}_j, R_{j+1}}^+|}{|B_j|} \right)^{\frac{\sigma}{d}} \left( \fint_{B_j} f\left( \frac{w_j(x)}{R^s} \right) \dx + \frac{\Phi_{W^{s, f}(B_{j+1})}(\tilde{w}_j)}{|B_j|}  \right),
\end{equation}
where the constant $C$ depends only on $d$, $s_0$ and $q$ at this point.\\
In order to set up a suitable iteration scheme based on \eqref{eq:locbdd1}, it remains to estimate the quantity $\Phi_{W^{s, f}(B_{j+1})}(\tilde{w}_j)$. Since $u \in G_+(\Omega; q, c, s, f)$, we have
\begin{equation} \label{eq:locbdd-I12}
\begin{split}
\Phi_{W^{s, f}(B_{j+1})}(\tilde{w}_j)
&\leq c \left( \frac{R_j}{R_j-R_{j+1}} \right)^{q} \int_{B_j} f\left( \frac{\tilde{w}_j(x)}{R_j^s} \right) \dx \\
&\quad + c \left( \frac{R_j}{R_j-R_{j+1}} \right)^{d+sq} \|\tilde{w}_j \|_{L^1(B_{j})} \int_{\Rd \setminus B_{j+1}} f' \left( \frac{\tilde{w}_j(y)}{|y|^s} \right) \frac{1-s}{|y|^{d+s}} \dy \\
&=: I_1 + I_2.
\end{split}
\end{equation}
For $I_1$, we use \Cref{lem:upper} and the fact that $R_j \leq R$ to deduce
\begin{equation} \label{eq:locbdd-I1}
I_1 \leq c \left( \frac{R_j}{R_j-R_{j+1}} \right)^{q} \left( \frac{R}{R_j} \right)^{sq} \int_{B_j} f\left( \frac{w_j(x)}{R^s} \right) \dx \leq C \frac{2^{qj}}{(\tau-\rho)^{q}} \int_{B_j} f\left( \frac{w_j(x)}{R^s} \right) \dx.
\end{equation}
For $I_2$, using monotonicity of $f'$ and the assumption \eqref{eq:pq-upper} again, we observe that
\begin{equation*}
\begin{split}
\frac{\tilde{w}_j}{R^s} f'\left( \frac{k}{R^s} \right)
&\leq q 2^{(q-1)(j+2)} \frac{\tilde{w}_j}{R^s} f'\left( \frac{\tilde{k}_j-k_j}{R^s} \right) \\
&\leq q 2^{(q-1)(j+2)} \frac{w_j}{R^s} f'\left( \frac{w_j}{R^s} \right) \leq q^2 2^{(q-1)(j+2)} f\left( \frac{w_j}{R^s} \right).
\end{split}
\end{equation*}
Thus, $I_2$ can be estimated as
\begin{equation*}
I_2 \leq C \frac{2^{(d+2q)j}}{(\tau-\rho)^{d+sq}} \left( \int_{B_j} f\left( \frac{w_j(x)}{R^s} \right) \dx \right) \frac{R^s}{f'(k/R^s)} \int_{\Rd \setminus B_{\rho R}} f'\left( \frac{u_+(y)}{|y|^s} \right) \frac{1-s}{|y|^{d+s}} \dy.
\end{equation*}
If we assume that for some $\delta \in (0,1)$, $k \geq k_1 := \delta 2^s \mathrm{Tail}_{f'}(u_+;0,R/2)$, it follows:
\begin{equation} \label{eq:locbdd-l2}
I_2 \le C \frac{\delta^{1-q}}{(\tau-\rho)^{d+sq}} 2^{(d+2q)j} \int_{B_j} f\left( \frac{w_j(x)}{R^s} \right) \dx,
\end{equation}
where we used \eqref{eq:derivativedoubling2}, and $C = C(d,s_0,q,c)$ is a positive constant. Combining \eqref{eq:locbdd-I12}, \eqref{eq:locbdd-I1} and \eqref{eq:locbdd-l2}:
\begin{equation} \label{eq:locbdd2}
\frac{1}{|B_j|} \Phi_{W^{s, f}(B_{j+1})}(\tilde{w}_j) \leq C \frac{\delta^{1-q}}{(\tau-\rho)^{d+q}} 2^{(d+2q)j} \fint_{B_j} f\left( \frac{w_j(x)}{R^s} \right) \dx.
\end{equation}

Since we have by \Cref{lem:upper}
\begin{equation} \label{eq:locbdd3}
f\left( \frac{k}{R^s} \right) \frac{|A_{\tilde{k}_j, R_{j+1}}^+|}{|B_j|} \leq q 2^{q(j+2)} f\left( \frac{\tilde{k}_j-k_j}{R^s} \right) \frac{|A_{\tilde{k}_j, R_{j}}^+|}{|B_j|} \leq q 2^{q(j+2)} \fint_{B_{j}} f\left(\frac{w_j(x)}{R^s} \right) \dx,
\end{equation}
it follows from \eqref{eq:locbdd1}, \eqref{eq:locbdd2} and \eqref{eq:locbdd3} that
\begin{equation*}
Y_{j+1} \leq C_0 \frac{\delta^{1-q}}{(\tau-\rho)^{d+q}} b^j Y_j^{1+\beta},
\end{equation*}
where $b=2^{d+3q} > 1$, $\beta=\sigma/d > 0$, $C_0=C_0(d, s_0, q, c) > 1$ and
\begin{equation*}
Y_j = \frac{1}{f(k/R^s)} \fint_{B_{j}} f\left( \frac{w_{j}(x)}{R^s} \right) \dx.
\end{equation*}
Let us take
\begin{equation*}
k = R^s f^{-1} \left( C_1 \frac{\delta^{(1-q)2d/s_0}}{(\tau-\rho)^{\gamma}} \fint_{B_{\tau R}} f\left( \frac{u_+(x)}{R^s} \right) \dx \right) + k_1,
\end{equation*}
where $\gamma = 2d(d+q)/s_0$, $C_1 = C_0^{2d/s_0} b^{4d^2/s_0^2}$ and $k_1$ is as before. This choice provides
\begin{equation*}
Y_0 \leq \left( \frac{C_0 \delta^{1-q}}{(\tau-\rho)^{d+q}} \right)^{-1/\beta} b^{-1/\beta^2},
\end{equation*}
where we used that $\sigma \ge s_0 /2$. Therefore, \cite[Lemma 4.7]{LaUr68} shows that $Y_j \to 0$ as $j \to \infty$, which concludes that $u \leq k$ a.e. in $B_{\rho R}$. By monotonicity of $f$, it follows that $f(u/R^s) \leq f(k/R^s)$ a.e. in $B_{\rho R}$, which implies the desired result due to \Cref{lem:upper} and since $f(a+b) \le 2^{q}(f(a) + f(b))$.
\end{proof}

The following result includes \Cref{thm:locbdd} as the special case $\varepsilon = 1$. It is a direct consequence of \Cref{thm:locbdd} and a classical iteration argument.

\begin{corollary} \label{cor:locbdd-e}
Let $\Omega$ be an open subset in $\Rd$. Let $0 < s_0 \leq s < 1$, $q > 1$, $c > 0$, $\varepsilon \in (0,1]$ and assume that $f$ satisfies \eqref{eq:pq-upper}. There exists a constant $C > 0$ such that if $u \in G_+(\Omega; q, c, s, f)$, then for any $B_R(x_0) \subset \Omega$ and $\delta \in (0,1)$, it holds that
\begin{equation} \label{eq:locbdd-e}
f^{\varepsilon} \left( \sup_{B_{R/2}(x_0)} \frac{u_+}{R^s} \right) \leq C \delta^{-\mu} \fint_{B_{R}(x_0)} f^{\varepsilon} \left( \frac{u_+(x)}{R^s} \right) \dx + \delta f^{\varepsilon} \left( \frac{\mathrm{Tail}_{f'}(u_+; x_0, R/2)}{(R/2)^s} \right),
\end{equation}
where $\mu = 2d(q-1)/(\varepsilon s_0)$. The constant $C$ depends only on $d$, $s_0$, $q$, $c$ and $\varepsilon$.
\end{corollary}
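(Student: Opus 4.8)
The plan is to derive \eqref{eq:locbdd-e} from \Cref{thm:locbdd} by a standard interpolation/iteration argument over a sequence of shrinking balls. The new feature compared to \Cref{thm:locbdd} is the power $\varepsilon \in (0,1]$ on $f$; the price is that the factor $f^{\varepsilon}$ appears on the right-hand side and only with the first power, which forces us to absorb a sublinear term via Young's inequality.

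First I would fix $x_0 = 0$ and, for $j \ge 0$, set $\rho_j = 1 - 2^{-j-1}$, so that $\rho_0 = 1/2$ and $\rho_j \uparrow 1$, and write $S_j = f\bigl(\sup_{B_{\rho_j R}} u_+/R^s\bigr)$ and $T = f\bigl(\mathrm{Tail}_{f'}(u_+;0,R/2)/(R/2)^s\bigr)$. Applying \Cref{thm:locbdd} with the pair $\rho = \rho_j < \tau = \rho_{j+1}$ (note $\tau - \rho = 2^{-j-2}$) and with a parameter $\delta_j \in (0,1)$ to be chosen gives
\begin{equation*}
S_j \le C\,\delta_j^{(1-q)2d/s_0}\,2^{(j+2)\gamma}\fint_{B_{\rho_{j+1}R}} f\!\left(\frac{u_+(x)}{R^s}\right)\dx + \delta_j\,T.
\end{equation*}
Raising this to the power $\varepsilon$ and using $(a+b)^{\varepsilon} \le a^{\varepsilon} + b^{\varepsilon}$ (valid since $\varepsilon \le 1$), we obtain
\begin{equation*}
S_j^{\varepsilon} \le C^{\varepsilon}\delta_j^{(1-q)2d\varepsilon/s_0}\,2^{(j+2)\gamma\varepsilon}\left(\fint_{B_{\rho_{j+1}R}} f\!\left(\frac{u_+(x)}{R^s}\right)\dx\right)^{\varepsilon} + \delta_j^{\varepsilon}\,T^{\varepsilon}.
\end{equation*}
The key step is to control the averaged integral of $f$ by an averaged integral of $f^{\varepsilon}$ together with a fraction of $S_{j+1}^{\varepsilon}$. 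Since $f(u_+(x)/R^s) \le S_{j+1}$ on $B_{\rho_{j+1}R}$, we may write $f(u_+/R^s) = f(u_+/R^s)^{1-\varepsilon} f(u_+/R^s)^{\varepsilon} \le S_{j+1}^{1-\varepsilon} f(u_+/R^s)^{\varepsilon}$, hence
\begin{equation*}
\left(\fint_{B_{\rho_{j+1}R}} f\!\left(\frac{u_+(x)}{R^s}\right)\dx\right)^{\varepsilon} \le S_{j+1}^{(1-\varepsilon)\varepsilon}\left(\fint_{B_{\rho_{j+1}R}} f\!\left(\frac{u_+(x)}{R^s}\right)^{\varepsilon}\dx\right)^{\varepsilon} \le S_{j+1}^{(1-\varepsilon)\varepsilon}\fint_{B_{R}} f\!\left(\frac{u_+(x)}{R^s}\right)^{\varepsilon}\dx,
\end{equation*}
where in the last inequality I used Jensen ($t\mapsto t^{\varepsilon}$ concave, so the $\varepsilon$-th power of the average is bounded by the average of the $\varepsilon$-th power) together with $B_{\rho_{j+1}R}\subset B_R$ and the comparability $|B_R|/|B_{\rho_{j+1}R}| \le 2^d$. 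Now applying Young's inequality to split $S_{j+1}^{(1-\varepsilon)\varepsilon}\cdot(\text{rest})$ — with exponents $1/(1-\varepsilon)$ and $1/\varepsilon$ — produces a term $\eta\,S_{j+1}^{\varepsilon}$ plus a term of the form $C_\eta\,2^{\cdots j}\,\delta_j^{\cdots}\,\bigl(\fint_{B_R} f^{\varepsilon}(u_+/R^s)\bigr)^{\text{power}}$. Denoting $A := \fint_{B_R} f^{\varepsilon}(u_+(x)/R^s)\dx$, we arrive at a recursion of the shape
\begin{equation*}
S_j^{\varepsilon} \le \eta\,S_{j+1}^{\varepsilon} + C\,\delta_j^{-m}\,b^{j}\,A + \delta_j^{\varepsilon}\,T^{\varepsilon}
\end{equation*}
for suitable $b>1$, $m>0$, and any $\eta \in (0,1)$; choosing $\eta$ appropriately (e.g. $\eta = 1/2$) and $\delta_j = 2^{-j}\delta$ for a suitable range of exponents, summing the geometric-type series $\sum_j \eta^j$ against the polynomially growing coefficients, and invoking a standard absorbing lemma (see e.g. \cite[Lemma 4.7]{LaUr68} or the Giaquinta iteration lemma) to absorb $\eta^{j} S_{j+1}^{\varepsilon}$ against $S_j^{\varepsilon}$, yields
\begin{equation*}
S_0^{\varepsilon} \le C\,\delta^{-\mu}\,A + \delta^{\varepsilon}\,T^{\varepsilon},
\end{equation*}
which is exactly \eqref{eq:locbdd-e} after replacing $\delta^{\varepsilon}$ by $\delta$ (relabel $\delta \leftarrow \delta^{\varepsilon}$, which only changes $\mu$ by the factor $1/\varepsilon$ — matching the claimed $\mu = 2d(q-1)/(\varepsilon s_0)$).

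The main obstacle I anticipate is bookkeeping: tracking the exponent $\mu$ through the Young's inequality split and the $\delta_j = 2^{-j}\delta$ rescaling so that the final power of $\delta$ is exactly $\delta^{-\mu}$ with $\mu = 2d(q-1)/(\varepsilon s_0)$, and simultaneously ensuring the series $\sum_j \delta_j^{-m} b^{j} \eta^{j}$ converges, which requires $\eta$ small enough relative to $b$ and the $j$-dependence of $\delta_j^{-m}$. One must also handle the trivial case where $\sup_{B_{R/2}} u_+ = \infty$ or the right-hand side is infinite separately (then there is nothing to prove, or it follows from \Cref{thm:locbdd} that $u_+$ is locally bounded so $S_0$ is finite). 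Everything else is routine: the concavity inequalities for $t\mapsto t^{\varepsilon}$, the ball-volume comparisons, and the absorbing lemma are all standard, and no new structural property of $f$ beyond \eqref{eq:pq-upper} (already used in \Cref{thm:locbdd}) is needed.
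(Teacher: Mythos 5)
Your strategy is essentially the paper's: interpolate $f = f^{1-\varepsilon} f^{\varepsilon}$ using the $\sup$-bound, apply Young's inequality with exponents $1/(1-\varepsilon)$ and $1/\varepsilon$ to absorb a fraction of the $\sup$-term, and iterate. The paper works with the quantity $g(\rho) = f\bigl(\sup_{B_{\rho R}} u_+/R^s\bigr)$ directly (no $\varepsilon$-th powers until the very end), applies the Giaquinta--Giusti iteration lemma \cite[Lemma~1.1]{GiGi82} on the continuum of radii $\rho \in [1/2,1]$, and only at the last step raises to the power $\varepsilon$; you instead raise to the power $\varepsilon$ early and iterate explicitly over dyadic radii $\rho_j$, which is morally equivalent but requires more bookkeeping. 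Both routes end with the relabelling $\delta \mapsto \delta^{\varepsilon}$ to produce the stated $\mu = 2d(q-1)/(\varepsilon s_0)$.

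One concrete error in your write-up: the step
\begin{equation*}
S_{j+1}^{(1-\varepsilon)\varepsilon}\left(\fint_{B_{\rho_{j+1}R}} f^{\varepsilon}\!\left(\frac{u_+}{R^s}\right)\dx\right)^{\varepsilon} \le S_{j+1}^{(1-\varepsilon)\varepsilon}\fint_{B_{R}} f^{\varepsilon}\!\left(\frac{u_+}{R^s}\right)\dx
\end{equation*}
is false in general, and the Jensen justification is misstated: for concave $t\mapsto t^{\varepsilon}$, Jensen gives $\bigl(\fint g\bigr)^{\varepsilon} \ge \fint g^{\varepsilon}$, not $\le$, and in any case the inequality $X^{\varepsilon} \le X$ only holds when $X \ge 1$. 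Fortunately this step is unnecessary: keep $\bigl(\fint f^{\varepsilon}\bigr)^{\varepsilon}$ as it is, and when you apply Young's inequality with conjugate exponent $1/\varepsilon$ to the second factor, that exponent precisely removes the outer $\varepsilon$ and yields $\fint f^{\varepsilon}$ to the first power, as appears (correctly) in your recursion for $S_j^{\varepsilon}$. Also note that the absorbing lemma you want here is \cite[Lemma~1.1]{GiGi82}, not \cite[Lemma~4.7]{LaUr68}; the latter handles the superlinear iteration $Y_{j+1} \le C b^j Y_j^{1+\beta}$ (used in the proof of \Cref{thm:locbdd}, not of this corollary). Finally, you correctly flag that one must know $g$ is finite to invoke the iteration lemma, which follows from \Cref{thm:locbdd}.
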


\begin{proof}
We may assume that $x_0 = 0$. Let $1/2 \leq \rho < \tau \leq 1$, $\delta_0 \in (0,1)$. By applying \Cref{thm:locbdd} with $\delta_0$, we have
\begin{equation} \label{eq:locbdd-e-I12}
\begin{split}
g(\rho)
&\leq C \frac{\delta_0^{(1-q)2d/s_0}}{(\tau-\rho)^{\gamma}} \fint_{B_{\tau R}} f\left( \frac{u_+(x)}{R^s} \right) \dx + \delta_0 f \left( \frac{\mathrm{Tail}_{f'}(u_+;0,R/2)}{(R/2)^s} \right) =: I_1 + I_2,
\end{split}
\end{equation}
where $C = C(d, s_0, q, c) > 0$, $\gamma = 2d(d+q)/s_0$ and
\begin{equation*}
g(\rho) = f\left( \sup_{B_{\rho R}} \frac{u_+}{R^s} \right).
\end{equation*}
Using Young's inequality, we obtain
\begin{equation} \label{eq:locbdd-e-I1}
\begin{split}
I_1
&\leq C \frac{\delta_0^{(1-q)2d/s_0}}{(\tau-\rho)^{\gamma}} g(\tau)^{1-\varepsilon} \fint_{B_{\tau R}} f^{\varepsilon} \left( \frac{u_+(x)}{R^s} \right) \dx \\
&\leq \frac{1}{2} g(\tau) + C \frac{\delta_0^{-\mu}}{(\tau-\rho)^{\gamma/\varepsilon}} \left( \fint_{B_{R}} f^{\varepsilon} \left( \frac{u_+(x)}{R^s} \right) \dx \right)^{1/\varepsilon}
\end{split}
\end{equation}
for some $C = C(d, s_0, q, c, \varepsilon) > 0$, where $\mu = 2d(q-1)/(\varepsilon s_0)$.
Combining \eqref{eq:locbdd-e-I12} and \eqref{eq:locbdd-e-I1}:
\begin{equation*}
g(\rho) \leq \frac{1}{2} g(\tau) + C \frac{\delta_0^{-\mu}}{(\tau-\rho)^{\gamma/\varepsilon}} \left( \fint_{B_{R}} f^{\varepsilon} \left( \frac{u_+(x)}{R^s} \right) \dx \right)^{1/\varepsilon} + \delta_0 f\left( \frac{\mathrm{Tail}_{f'}(u_+; 0, R/2)}{(R/2)^s} \right)
\end{equation*}
for any $1/2 \leq \rho < \tau \leq 1$. Therefore, by an iteration lemma, see \cite[Lemma 1.1]{GiGi82}:
\begin{equation*}
g(1/2) \leq C \delta_0^{-\mu} \left( \fint_{B_{R}} f^{\varepsilon} \left( \frac{u_+(x)}{R^s} \right) \dx \right)^{1/\varepsilon} + C \delta_0 f\left( \frac{\mathrm{Tail}_{f'}(u_+; 0, R/2)}{(R/2)^s} \right).
\end{equation*}
Using the inequality $(a+b)^{\varepsilon} \leq a^{\varepsilon} + b^{\varepsilon}$, we obtain
\begin{equation} \label{eq:Cdelta}
f^{\varepsilon} \left( \sup_{B_{R/2}} \frac{u_+}{R^s} \right) \leq C \delta_0^{-\varepsilon\mu} \fint_{B_{R}} f^{\varepsilon} \left( \frac{u_+(x)}{R^s} \right) \dx + C \delta_0^{\varepsilon} f^{\varepsilon} \left( \frac{\mathrm{Tail}_{f'}(u_+; 0, R/2)}{(R/2)^s} \right),
\end{equation}
where $C = C(d, s_0, q, c, \varepsilon) > 1$. For a given $\delta \in (0,1)$, the inequality \eqref{eq:locbdd-e} follows from \eqref{eq:Cdelta} by setting $\delta_0 = (\delta/C)^{1/\varepsilon} \in (0,1)$.
\end{proof}

\section{Weak Harnack inequality} \label{sec:WHI}

The goal of this section is to prove a weak Harnack inequality for functions $u \in G_-(\Omega; q, c, s, f)$. There exist several possible estimates in the literature, which go under the name ``weak Harnack inequality''. They all 
differ in the aspect that $\inf u$ is estimated by different Lebesgue-norms of $u$. We will prove an estimate of the following type since it allows us to deduce a full Harnack inequality by combination with \Cref{cor:locbdd-e}.

\begin{theorem} \label{thm:inf}
Let $\Omega$ be an open subset in $\Rd$. Let $0 < s_0 \leq s < 1$, $1<p \leq q$, $c > 0$ and assume that $f$ satisfies \eqref{eq:pq}. There exist constants $C > 0$ and $\varepsilon \in (0,1)$ such that if $u \in G_-(\Omega; q, c, s, f)$ is nonnegative in $B_{R}(x_0) \subset \Omega$, then
\begin{equation*}
\fint_{B_{R}(x_0)} f^{\varepsilon}\left( \frac{u(x)}{R^s} \right) \,\mathrm{d}x \leq C f^{\varepsilon} \left( \inf_{B_{R/2}(x_0)} \frac{u}{R^s} \right) + C f^{\varepsilon} \left( \frac{\mathrm{Tail}_{f'}(u_-; x_0, R)}{R^s} \right).
\end{equation*}
The constants $C$ and $\varepsilon$ depend only on $d$, $s_0$, $q$ and $c$.
\end{theorem}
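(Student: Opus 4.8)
The plan is to follow the classical De Giorgi--Moser strategy for the weak Harnack inequality, adapted to the Orlicz setting by working with the growth function $f$ throughout rather than with powers. The proof splits naturally into three stages: a \emph{measure-to-pointwise} estimate (a lower bound on the infimum in terms of a sublevel-set measure), an \emph{expansion of positivity} (growth of the measure of positivity from an initial scale to a fixed fraction of the ball), and a \emph{logarithmic estimate} (a BMO-type bound coming from the De Giorgi inequality tested near $k$, which feeds into a Krylov--Safonov-type covering / $L^\varepsilon$ argument à la Bombieri--Giusti). Since $u \in G_-(\Omega;q,c,s,f)$, the natural quantity to iterate is $\fint_{B_r} f(u/R^s)$-type averages of truncations $w_- = (u-k)_-$, and the key structural inputs are \Cref{lem:upper}, \Cref{lem:lower}, the doubling estimates \eqref{eq:derivativedoubling1}--\eqref{eq:der-subadd}, and the inverse estimates \Cref{lem:fprimeinvadd}, \Cref{lem:fprimeinvdoubling}. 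The tail term $\mathrm{Tail}_{f'}(u_-;x_0,R)$ will arise exactly from the last term in \eqref{eq:DG} applied to $w_-$, since $u_- = w_-$ when $k=0$ and, more generally, $w_-(y) \le k + u_-(y)$ outside $B_R$.

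First I would prove the measure-theoretic De Giorgi iteration: if $|\{u < k\} \cap B_{\rho R}|$ is small compared to $|B_{\rho R}|$ (with smallness depending only on $d,s_0,p,q,c$), and $k \le$ some multiple of $\inf_{B_{R/2}} u + \mathrm{Tail}_{f'}(u_-;x_0,R)$, then $u \ge k/2$ a.e.\ in $B_{\rho R/2}$ (say). This is the analogue of \Cref{thm:locbdd} but for $G_-$ and for sublevel sets; it is run by the same shrinking-ball iteration with levels $k_j = (1+2^{-j})k/2$ decreasing to $k/2$, using the $G_-$-Caccioppoli inequality \eqref{eq:DG}, the fractional Sobolev embedding (as in the proof of \Cref{thm:locbdd}, with the auxiliary exponent $\sigma$ keeping everything robust as $s\to1^-$), and \cite[Lemma 4.7]{LaUr68}. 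The role of \eqref{eq:pq-lower} (i.e.\ $p>1$) here is to control the cross term $\int\int f'(w_+(y)/|x-y|^s) w_-(x)/|x-y|^s$ in \eqref{eq:DG} (now with the roles of $\pm$ swapped) and to ensure the nonlinear terms close up; concretely one uses that $\frac{w_-}{R^s} f'(w_-/R^s) \le q\, f(w_-/R^s)$ and the doubling of $f'$. Next, the logarithmic lemma: testing \eqref{eq:DG} for $G_-$ with a level $k$ comparable to $\inf u + \text{Tail}$ and exploiting that $v := \log\bigl(\tfrac{k + \text{Tail}}{u + \text{Tail}}\bigr)$ (truncated) satisfies a Caccioppoli-type bound, one obtains $\fint_{B_{R/2}} |v - (v)_{B_{R/2}}| \le C$; the nonlocal tail is absorbed precisely by adding $\mathrm{Tail}_{f'}(u_-;x_0,R)$ inside the logarithm, using \eqref{eq:tail} and \eqref{eq:f-finv}. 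Combining the logarithmic lemma (which gives, via a measure estimate for $v$, that $|\{u < \lambda(\inf u + \text{Tail})\} \cap B_r|$ can be made an arbitrarily small fraction of $|B_r|$ by choosing $\lambda$ small) with the first step yields: $\inf_{B_{R/4}} u \ge c_0\bigl(\inf_{B_{R/2}} u$-surrogate$\bigr)$ — but more usefully, it yields an $L^\varepsilon$-type bound. Here I would invoke the Bombieri--Giusti lemma (an abstract measure-theoretic iteration): given (a) the measure-to-pointwise estimate of Step~1 in the scale-invariant form ``$\sup$ of sublevel truncations controlled by a small power of the average'' and (b) the $\exp$-type distribution bound from Step~3, one concludes $\bigl(\fint_{B_R} f^\varepsilon(u/R^s)\bigr)^{1/\varepsilon} \le C f(\text{something} \le \inf_{B_{R/2}} u + \text{Tail})$, which after applying \Cref{lem:upper} (to handle $f$ of a sum) and raising to $f^\varepsilon$ is exactly the claimed inequality.

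The work to package correctly — and the step I expect to be the main obstacle — is the logarithmic estimate and its interface with the nonlocal tail. In the local non-standard-growth case (Mascolo--Papi) the log-Caccioppoli estimate is already delicate because one cannot simply use $f(\log u)$; one must choose the right test function and use convexity of $f$ together with \eqref{eq:pq} to get a clean BMO bound, and the constant must not degenerate. In the nonlocal case there is the extra difficulty that \eqref{eq:DG} only provides control of $\Phi_{W^{s,f}(B_r)}(w_-)$ plus a cross term plus a tail term, so one must track how $\log$ of the regularized ratio interacts with the $f$-modular of differences, and verify that the nonlocal cross term has a sign or can be bounded so as to produce the $\mathrm{Tail}_{f'}$ contribution with the correct homogeneity (note the tail in \Cref{thm:inf} is at scale $R$, matching the $B_R$ in \eqref{eq:DG}, which is consistent). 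A secondary technical point is keeping \emph{all} constants robust as $s\to1^-$: as in the proof of \Cref{thm:locbdd} this forces the use of the intermediate exponent $\sigma = \max\{s_0/2,(3s-1)/2\}$ in every Sobolev step, and one must recheck that the $(1-s)$ prefactors are consumed correctly (via \eqref{eq:sigma}) in the iteration and in the tail term. Once the logarithmic lemma is in hand in the form ``$|\{\log((k+T)/(u+T)) > j\}\cap B_{R/2}| \le C|B_{R/2}|/j$'', the rest is a by-now standard combination of this with the De Giorgi measure lemma through the Bombieri--Giusti iteration, with \Cref{lem:g-inv}, \Cref{lem:fprimeinvadd}, \Cref{lem:fprimeinvdoubling} used to convert back and forth between $f$-averages and the tail expression and to absorb multiplicative constants into $\inf_{B_{R/2}} u$.
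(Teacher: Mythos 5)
Your outline is a sound Moser-style strategy for a weak Harnack inequality, and it could in principle be carried out, but the paper takes a genuinely different and considerably more economical route that entirely avoids the logarithmic estimate and the Bombieri--Giusti lemma --- the two pieces you yourself identify as the main obstacles.

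The paper's argument has only two real ingredients. First, it invokes the growth lemma (\Cref{lem:growth}, which is \cite[Theorem 4.1]{ChKiWe21}, already proved in the companion paper) as a black box: this is exactly your ``measure-to-pointwise'' step, so no new De Giorgi iteration is needed here. Second, it upgrades the growth lemma to a \emph{power-type decay of superlevel sets}, namely $|A^+_{t,R}|/|B_R| \le (L/(\delta t))^a$ with $L := \inf_{B_{R/2}} u + \mathrm{Tail}_{f'}(u_-;x_0,8R)$, via the Krylov--Safonov covering argument (citing \cite[Lemma 6.7 and (6.41)]{Coz17}). No logarithmic lemma is proved or used. The $L^\varepsilon$-estimate is then extracted from the power decay by a direct Cavalieri computation: writing $\fint_{B_R} f^\varepsilon(u/R^s) = \varepsilon\int_0^\infty |A^+_{tR^s,R}|/|B_R|\, f^{\varepsilon-1}(t)f'(t)\,\d t$, splitting at $t = L/R^s$, and using \eqref{eq:pq-upper} plus an integration by parts with the choice $\varepsilon = \tfrac12\min(1,a/q)$ so that the boundary term vanishes (by $t^{-q}f(t)$ decreasing) and the tail integral is absorbed. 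This replaces the entire log-lemma/BMO/Bombieri--Giusti machinery by one page of elementary calculus, with the growth function $f$ threaded through via the change of variables and \eqref{eq:pq-upper}. Your instinct that the log estimate is delicate in the nonlocal Orlicz setting (how does $\log$ of the regularized ratio interact with the $f$-modular?) is exactly right, and that is precisely why the paper does not go that way. If you did insist on the Moser route you would have to establish a nonlocal Orlicz log-Caccioppoli inequality from scratch --- a separate, nontrivial contribution not present in the cited prerequisites --- whereas the paper's route uses only already-available results plus the Cavalieri computation. One smaller point: you propose running a fresh shrinking-ball iteration for $G_-$ analogous to \Cref{thm:locbdd}; this is unnecessary since the growth lemma already packages the measure-to-pointwise implication in a form ready for covering.
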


Before we give the proof of \Cref{thm:inf}, we recall the following growth lemma from \cite{ChKiWe21}:

\begin{lemma} \cite[Theorem 4.1]{ChKiWe21}
\label{lem:growth}
Let $\Omega$ be an open subset in $\Rd$. Let $1<p\leq q$, $c, H > 0$, $R>0$, $s_0\in(0,1)$ and assume $s \in [s_0,1)$. Assume that $f$ satisfies \eqref{eq:pq}. Suppose that $B_{4R} = B_{4R}(x_0) \subset \Omega$. Let $u \in G_-(\Omega; q, c, s, g)$ satisfy $u \ge 0$ in $B_{4R}$ and
\begin{equation*}
|B_{2R} \cap \lbrace u \geq H \rbrace| \geq \gamma |B_{2R}|
\end{equation*}
for some $\gamma \in (0,1)$. There exists $\delta \in (0,1)$ such that if
\begin{equation*}
\mathrm{Tail}_{f'}(u_-; x_0, 4R) \leq \delta H,
\end{equation*}
then
\begin{equation*}
u \geq \delta H \quad\text{in}~ B_{R}.
\end{equation*}
The constant $\delta$ depends only on $d$, $s_0$, $p$, $q$, $c$ and $\gamma$.
\end{lemma}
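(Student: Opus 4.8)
The plan is to prove \Cref{lem:growth} by the classical De Giorgi two--step scheme, adapted to the nonlocal Orlicz setting: first an \emph{infimum (critical--mass) lemma}, built on the same iteration as in the proof of \Cref{thm:locbdd}, and then a \emph{measure--shrinking lemma} that propagates the density hypothesis on $\{u \ge H\}$ down to an arbitrarily small sub--level set; combining the two gives the growth lemma. We may assume $x_0 = 0$.

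For the infimum lemma I would show: there are $\nu_0, \delta_1 \in (0,1)$, depending only on $d, s_0, p, q, c$, such that for every $H_0 > 0$, if $|B_{2R} \cap \{u < 2H_0\}| \le \nu_0 |B_{2R}|$ and $\mathrm{Tail}_{f'}(u_-; 0, 4R) \le \delta_1 H_0$, then $u \ge H_0$ a.e.\ in $B_R$. This is the $G_-$--counterpart of the proof of \Cref{thm:locbdd}. With $k_j = H_0(1 + 2^{-j})$, $\rho_j = (1 + 2^{-j})R$ and $w_j = (u - k_j)_-$, one has $w_j \le k_j \le 2H_0$ on $B_{\rho_j}$ since $u \ge 0$ in $B_{4R}$; applying \eqref{eq:DG} on $B_{\rho_{j+1}} \subset B_{\rho_j}$ at the level $k_{j+1}$ (discarding the nonnegative cross term on the left) and splitting the tail of $w_j$ over $(B_{4R} \setminus B_{\rho_{j+1}}) \cup (\Rd \setminus B_{4R})$ — where $w_j \le k_j$, resp.\ $w_j(y) \le k_j + u_-(y)$ — one bounds it, using \eqref{eq:der-subadd}, \eqref{eq:f-finv}, \eqref{eq:derivativedoubling2} and $k_j \ge H_0$, by $C R^{-s}(1 + \delta_1^{p-1}) f'(k_j/R^s)$. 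Feeding this into the fractional Sobolev inequality exactly as in the proof of \Cref{thm:locbdd} (same exponent $\sigma = \max\{s_0/2, (3s-1)/2\}$), and using $f(t) \le p^{-1} t f'(t)$, gives a recursion $Y_{j+1} \le C_0 b^j Y_j^{1 + \sigma/d}$ for $Y_j = f(H_0/R^s)^{-1} \fint_{B_{\rho_j}} f(w_j/R^s) \dx$; since $Y_0 \le 2^q \nu_0$, choosing $\nu_0$ small triggers \cite[Lemma 4.7]{LaUr68}, so $Y_j \to 0$ and $u \ge H_0$ in $B_R$.

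For the measure--shrinking lemma I would show: for every $\nu \in (0,1)$ there are $M \in \mathbb{N}$ and $\delta_2 \in (0,1)$, depending only on $d, s_0, p, q, c, \gamma, \nu$, such that if $|B_{2R} \cap \{u \ge H\}| \ge \gamma |B_{2R}|$ and $\mathrm{Tail}_{f'}(u_-; 0, 4R) \le \delta_2 H$, then $|B_{2R} \cap \{u < 2^{-M}H\}| \le \nu |B_{2R}|$. Working with $\ell_j = 2^{-j}H$ and $a_j = |B_{2R} \cap \{u < \ell_j\}|$, I would use \emph{both} terms on the left of \eqref{eq:DG} at level $\ell_{j+1}$: restricting the inner integration in the cross term to $\{u \ge H\} \cap B_{2R}$ (on which $(u - \ell_{j+1})_+ \ge \ell_{j+1}$) and bounding $|x - y| \le 4R$ shows that the cross term controls $f'(\ell_{j+1}/(4R)^s)\, \gamma\, R^{-s} \int_{B_{2R}} (u - \ell_{j+1})_- \dx$, i.e.\ the interaction of $\{u < \ell_{j+1}\}$ with the large--value set; combining this with the fractional seminorm term and a De Giorgi isoperimetric--type inequality in the fractional Orlicz setting, applied to $f((u - \ell_{j+1})_-/R^s)$ across the shell $\{\ell_{j+2} \le u < \ell_{j+1}\}$, yields $a_{j+2}^{\kappa} \le C_\gamma (1 + \delta_2^{p-1} 2^{jq})(a_{j+1} - a_{j+2}) |B_{2R}|^{\kappa - 1}$ for some $\kappa = \kappa(d, s_0) > 1$, the factor $2^{jq}$ arising from comparing $f'$ at the dyadic levels $\ell_j$ with $H$ in the tail hypothesis via \eqref{eq:derivativedoubling1}. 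Summing over $j = 0, \dots, M-1$, with $\sum_j (a_{j+1} - a_{j+2}) \le |B_{2R}|$ and the monotonicity of $a_j$, gives $a_{M+1}^\kappa \le C_\gamma M^{-1}(1 + \delta_2^{p-1} 2^{Mq}) |B_{2R}|^\kappa$; choosing $M$ large and then $\delta_2$ small enough that $\delta_2^{p-1} 2^{Mq} \le 1$ — possible exactly because $p > 1$ — makes $a_{M+1} \le \nu |B_{2R}|$.

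To conclude, apply the measure--shrinking lemma with $\nu = \nu_0$ to obtain $M$ and $\delta_2$; put $H_0 = 2^{-M-2}H$ (so that $\{u < 2H_0\} = \{u < 2^{-M-1}H\}$ has measure $a_{M+1} \le \nu_0 |B_{2R}|$) and set $\delta = \min\{2^{-M-2}, \delta_1 2^{-M-2}, \delta_2\}$; then $\mathrm{Tail}_{f'}(u_-; 0, 4R) \le \delta H$ meets the hypotheses of both lemmas, whence $u \ge H_0 = 2^{-M-2}H \ge \delta H$ in $B_R$, with $\delta$ depending only on $d, s_0, p, q, c, \gamma$. The main obstacle is the measure--shrinking step: since $f$ is non--homogeneous, $f'$ at consecutive dyadic levels $\ell_j$ can only be compared up to a factor $2^{j(q-1)}$, so the iteration cannot be run indefinitely — $M$ must be fixed first (from $\gamma$ and the target $\nu_0$), and only afterwards may $\delta$ be chosen, the gain $\delta^{p-1}$ (available because $p > 1$) being what absorbs $2^{Mq}$. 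A further delicate point, present throughout, is the nonlocal bookkeeping of the tail of the truncations $(u - \ell_j)_-$, which must be split into a part dominated by $\mathrm{Tail}_{f'}(u_-; 0, 4R)$ via \eqref{eq:f-finv} and a part dominated by the level $\ell_j$ via \eqref{eq:derivativedoubling2}, with all constants uniform for $s \in [s_0, 1)$.
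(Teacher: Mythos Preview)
The paper does not prove \Cref{lem:growth}; the lemma is quoted from \cite[Theorem~4.1]{ChKiWe21} without argument, so there is no in-paper proof to compare your proposal against.

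As an independent assessment: your two-step De~Giorgi scheme is the standard route, and the infimum (critical-mass) lemma is correctly set up --- it is the $G_-$ mirror of the iteration in \Cref{thm:locbdd}, with the tail of $(u-k_j)_-$ split over $(B_{4R}\setminus B_{\rho_{j+1}})\cup(\Rd\setminus B_{4R})$ exactly as you indicate, and the recursion closes via \cite[Lemma~4.7]{LaUr68}.

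The measure-shrinking step is where the outline becomes unconvincing. You claim a recursion $a_{j+2}^{\kappa}\le C_\gamma(1+\delta_2^{p-1}2^{jq})(a_{j+1}-a_{j+2})|B_{2R}|^{\kappa-1}$ obtained via ``a De~Giorgi isoperimetric-type inequality in the fractional Orlicz setting, applied to $f((u-\ell_{j+1})_-/R^s)$ across the shell $\{\ell_{j+2}\le u<\ell_{j+1}\}$''. In the local theory the factor $(a_{j+1}-a_{j+2})$ appears because the gradient of the truncated function is supported on that shell (coarea formula plus relative isoperimetric inequality). In the fractional setting there is no such localization: neither $\Phi_{W^{s,f}(B_{2R})}((u-\ell_{j+1})_-)$ nor the cross term in \eqref{eq:DG} is carried by the shell, and no fractional Orlicz isoperimetric inequality delivering the difference $(a_{j+1}-a_{j+2})$ is available. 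What the seminorm term actually gives, restricting to $x\in\{u<\ell_{j+2}\}$ and $y\in\{u\ge\ell_{j+1}\}$, is a lower bound involving $a_{j+2}\,(|B_{2R}|-a_{j+1})$, which does not telescope. The nonlocal shrinking argument (as in \cite{Coz17}, and presumably in \cite{ChKiWe21}) proceeds differently: one uses the left-hand side of \eqref{eq:DG} at level $\ell_j$ with $x\in\{u<\ell_{j+1}\}\cap B_{2R}$ and $y\in\{u\ge H\}\cap B_{2R}$ (density $\ge\gamma$), and compares directly with the right-hand side; the growth relation $f(\ell_j/R^s)\le 2^{-jp}f(H/R^s)$ from \Cref{lem:lower} versus $f'(H/R^s)\,\ell_{j+1}/R^s\ge p\,2^{-j-1}f(H/R^s)$ from \eqref{eq:pq-lower} then forces $a_{j+1}$ small for a \emph{single} sufficiently large $j$, without summation. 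Your instinct to use the cross term is right, but the path to the conclusion does not go through a shell-localized isoperimetric estimate.
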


\begin{proof}[Proof of \Cref{thm:inf}]
Without loss of generality, we assume that $x_0 = 0$.
Let us define
\begin{align*}
L := \inf_{B_{R/2}} u + \mathrm{Tail}_{f'}(u_-;0,8R).
\end{align*}
First of all, we claim that for any $H > 0$ it holds:
\begin{align}
\label{eq:inf-key}
\frac{|A^+_{t,R}|}{|B_R|} \le \left( \frac{L}{\delta t} \right)^a.
\end{align}
Here, $a = \frac{\log \frac{1}{2}}{\log \delta}$, where $\delta \in (0,1)$ is the constant from \Cref{lem:growth} applied with $\gamma = \frac{6^{-d}}{2}$ and $H := t$. 
The proof of \eqref{eq:inf-key} is a well-known consequence of \Cref{lem:growth} and a covering argument due to Krylov and Safonov. It is explained in detail in \cite[Lemma 6.7 and (6.41)]{Coz17} and can be adapted to our setup without any changes being necessary.\\
Let us explain how to deduce the desired result from \eqref{eq:inf-key}. We choose $\eps = \frac{1}{2}\min(1,\frac{a}{q})$ and compute by Cavalieri's principle and performing a change of variables
\begin{align*}
\fint_{B_R} f^{\eps}\left( \frac{u(x)}{R^s} \right) \dx &= \eps \int_{0}^{\infty} \frac{|B_R \cap \{ f(u/R^s) \ge t \}|}{|B_R|} t^{\eps - 1} \d t = \eps \int_{0}^{\infty} \frac{|A^+_{tR^s,R}|}{|B_R|} f^{\eps-1}(t) f'(t) \d t\\
&\le \eps \int_0^{L/R^s} f^{\eps-1}(t) f'(t) \d t + \eps \int_{L/R^s}^{\infty}  \left( \frac{L}{\delta t R^s} \right)^a f^{\eps-1}(t) f'(t) \d t\\
&=: I_1 + I_2.
\end{align*}
For $I_1$, by a change of variables,
\begin{align}
\label{eq:inf-I1}
I_1 = \eps \int_0^{f(L/R^s)} t^{\eps - 1} \d t = f^{\eps}\left( \frac{L}{R^s} \right).
\end{align}
For $I_2$, we apply \eqref{eq:pq-upper} and obtain
\begin{align*}
I_2 \le \eps q \delta^{-a} \left( \frac{L}{R^s} \right)^a \int_{L/R^s}^{\infty} t^{-1-a} f^{\eps}(t) \d t.
\end{align*}
From integration by parts and again \eqref{eq:pq-upper}, we see that
\begin{align*}
\int_{L/R^s}^{\infty} t^{-1-a} f^{\eps}(t) \d t &= \frac{1}{a} \left(\frac{L}{R^s}\right)^{-a} f^{\eps} \left( \frac{L}{R^s} \right) - \frac{1}{a}\lim_{t \to \infty} t^{-a}f^{\eps}(t) + \frac{\eps}{a} \int_{L/R^s}^{\infty} t^{-a} f^{\eps - 1}(t) f'(t) \d t\\
&\le \frac{1}{a} \left(\frac{L}{R^s}\right)^{-a} f^{\eps} \left( \frac{L}{R^s} \right) + \frac{1}{2} \int_{L/R^s}^{\infty} t^{-1-a} f^{\eps}(t) \d t,
\end{align*}
where we used the definition of $\eps$ and \Cref{lem:upper}(ii) in the last step.
It follows that
\begin{align}
\label{eq:inf-I2}
I_2 \le \frac{2q}{a} \delta^{-a} f^{\eps} \left( \frac{L}{R^s} \right),
\end{align}
which yields, upon combining \eqref{eq:inf-I1} and \eqref{eq:inf-I2}:
\begin{align*}
\fint_{B_R} f^{\eps}\left( \frac{u(x)}{R^s} \right) \dx \le C f^{\eps} \left( \frac{L}{R^s} \right) \le C f^{\eps} \left( \inf_{B_{R/2}} \frac{u}{R^s} \right) + C f^{\eps} \left( \frac{\mathrm{Tail}_{f'}(u_-; 0, 8R)}{R^s} \right),
\end{align*}
where we used that $f(a+b) \le 2^q(f(a) + f(b))$ and $(a+b)^{\eps} \le a^{\eps} + b^{\eps}$. The desired result follows by noticing that 
\begin{align*}
\mathrm{Tail}_{f'}(u_-; 0, 8R) \le C \mathrm{Tail}_{f'}(u_-; 0, R),
\end{align*}
which is a direct consequence of \Cref{lem:fprimeinvdoubling} applied with $\lambda = 8^s$.
\end{proof}

\section{Harnack inequality} \label{sec:harnack}

In this section, we prove our main result \Cref{thm:harnack}. First, we prove the following estimate for $\mathrm{Tail}_{f'}(u_+;x_0,R)$.

\begin{lemma} \label{lem:tail}
Let $\Omega$ be an open subset in $\Rd$. Let $0 < s_0 \leq s < 1$, $1 < p \leq q$, $c > 0$ and assume that $f$ satisfies \eqref{eq:pq}. There exists a constant $C > 0$ such that if $u \in G_-(\Omega; q, c, s, f)$ is nonnegative in $B_{R}(x_0) \subset \Omega$, then
\begin{equation*}
\mathrm{Tail}_{f'}(u_+; x_0, R/2) \le C \left(\sup_{B_{R/2}(x_0)} u + \mathrm{Tail}_{f'}(u_-; x_0, R/2)\right).
\end{equation*}
The constant $C$ depends on $d$, $s_0$, $p$, $q$ and $c$.
\end{lemma}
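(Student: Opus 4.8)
The plan is to apply the De Giorgi inequality for $G_-$ from \Cref{def:DG} at the center $x_0$ with inner radius $r=R/2$ and outer radius $R$ (admissible since $B_R(x_0)\subset\Omega$ forces $d(x_0,\partial\Omega)\ge R$), at the truncation level $k=2L$, where $L:=\sup_{B_{R/2}(x_0)}u+\mathrm{Tail}_{f'}(u_-;x_0,R/2)$. We may assume $L>0$, the case $L=0$ following by running the argument below with $k=\eps$ and letting $\eps\downarrow0$, using that \Cref{lem:lower} gives $f(t)\le t^p$ for $t\le1$, hence $f'(0)=0$. Write $w_\pm=(u-k)_\pm$ and $A_k^+=\{u>k\}$. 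Since $u\le\sup_{B_{R/2}}u<k$ a.e.\ in $B_{R/2}$, we have $A_k^+\subset\Rd\setminus B_{R/2}$ and $w_-(x)=k-u(x)\ge k/2$ for a.e.\ $x\in B_{R/2}$; moreover for such $x$ and every $y$ with $|y-x_0|\ge R/2$ one has $|x-y|\le 2|y-x_0|$. Combining these facts with the monotonicity of $f'$ and \eqref{eq:derivativedoubling2} (with $\lambda=2^{-s}$) to replace $|x-y|$ by $|y-x_0|$ inside $f'$, and integrating out $x$ over $B_{R/2}$, the second term on the left-hand side of \eqref{eq:DG} (in its $G_-$ form, with $\Phi_{W^{s,f}(B_{R/2})}(w_-)\ge0$ simply dropped) is bounded below by
\[
c\,k\,R^{d}\,(1-s)\int_{A_k^+}f'\!\left(\frac{w_+(y)}{|y-x_0|^s}\right)\frac{\dy}{|y-x_0|^{d+s}}
\]
for some $c=c(d,p,q)>0$.

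For the right-hand side of \eqref{eq:DG}, I would use $w_-\le k$ a.e.\ in $B_R$ (as $u\ge0$ there) to bound $\Phi_{L^f(B_R)}(w_-/R^s)\le|B_R|f(k/R^s)$ and $\|w_-\|_{L^1(B_R)}\le k|B_R|$, and turn $f(k/R^s)$ into $\tfrac1p\tfrac{k}{R^s}f'(k/R^s)$ via \eqref{eq:pq-lower}. For the nonlocal factor, $w_-(y)\le k+u_-(y)$ together with \eqref{eq:der-subadd} gives $f'(w_-(y)/|y-x_0|^s)\le C\big(f'(k/|y-x_0|^s)+f'(u_-(y)/|y-x_0|^s)\big)$; the first summand integrates against $|y-x_0|^{-d-s}$ over $\Rd\setminus B_{R/2}$ to $\lesssim (R/2)^{-s}f'(k/(R/2)^s)$ (the factor $(1-s)/s$ being controlled by $1/s_0$), while the second is, by the definition \eqref{eq:tail} of the tail and \eqref{eq:f-finv}, at most $(R/2)^{-s}f'\big(\mathrm{Tail}_{f'}(u_-;x_0,R/2)/(R/2)^s\big)$. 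Putting both sides together, dividing by $c\,k\,R^d$, and undoing the truncations — on $A_k^+$ one has $u_+=u=w_++k$, so another use of \eqref{eq:der-subadd} passes from $w_+$ back to $u_+$, while on $\{u\le k\}$ one simply has $u_+\le k$ — yields
\[
(1-s)(R/2)^s\int_{\Rd\setminus B_{R/2}}f'\!\left(\frac{u_+(y)}{|y-x_0|^s}\right)\frac{\dy}{|y-x_0|^{d+s}}\le C\left[f'\!\left(\tfrac{k}{(R/2)^s}\right)+f'\!\left(\tfrac{\mathrm{Tail}_{f'}(u_-;x_0,R/2)}{(R/2)^s}\right)\right],
\]
and with $k=2L$ the right-hand side is $\le C f'\big(L/(R/2)^s\big)$ by \eqref{eq:derivativedoubling1} (with $\lambda=2$) and monotonicity.

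Finally, applying the generalized inverse $(f')^{-1}$ and recalling that, by \eqref{eq:tail}, $\mathrm{Tail}_{f'}(u_+;x_0,R/2)=(R/2)^s(f')^{-1}(\,\cdot\,)$ of exactly the integral on the left above, the chain $(f')^{-1}\big(Cf'(z)\big)\le c_C(f')^{-1}\big(f'(z)\big)\le c_Cz$ — the first inequality being \Cref{lem:fprimeinvdoubling} with $\lambda=C$ and the second \eqref{eq:finv-f} — gives $\mathrm{Tail}_{f'}(u_+;x_0,R/2)\le c_C L$, which is the assertion, with $C=c_C$ depending only on $d,s_0,p,q$ and the De Giorgi constant $c$. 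I expect the main obstacle to be the first step: extracting from the awkward double integral in \eqref{eq:DG} a lower bound that, after integrating out $x$, has precisely the scaling $k\,R^{d}\cdot(\text{tail of }u_+)$ needed to match the right-hand side — in particular checking that the powers of $|x-y|$ (hence of $R$) line up and that the doubling constants coming from \eqref{eq:derivativedoubling2} and \eqref{eq:der-subadd} stay uniform for $s\in[s_0,1)$. Once the truncation level $k\sim L$ is correctly fixed, the remaining estimates are routine.
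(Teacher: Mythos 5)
Your argument is correct and follows essentially the same route as the paper's: apply the $G_-$ Caccioppoli inequality at a level comparable to $\sup_{B_{R/2}}u$, drop the positive Gagliardo term, bound the mixed double integral from below by integrating out $x$ (using $|x-y|\le 2|y-x_0|$, the doubling estimates for $f'$, and $f'(0)=0$ to pass between $A_k^+$ and $B_{R/2}^c$), bound the right-hand side using $w_-\le k$ and $w_-\le k+u_-$ together with \eqref{eq:der-subadd}, and finally invert $f'$ via \Cref{lem:fprimeinvdoubling} and \eqref{eq:finv-f}. The only cosmetic differences from the paper's proof are the choice of radii ($R/2$ and $R$ rather than $R/4$ and $R/2$; both give $|x-y|\le 2|y-x_0|$) and the choice of level: the paper takes $k=2\sup_{B_{R/2}}u$ and then needs \Cref{lem:fprimeinvadd} to split the inverted sum, whereas your $k=2L$ with $L=\sup_{B_{R/2}}u+\mathrm{Tail}_{f'}(u_-;x_0,R/2)$ absorbs both right-hand terms into $Cf'(L/(R/2)^s)$ before inverting, avoiding that lemma at the cost of a slightly larger truncation level.
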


\begin{proof}
Without loss of generality, we may assume $x_0=0$. Let $w = u - 2M$, where $M = \sup_{B_{R/2}} u$. By $u \in G_-(\Omega;q,c,s,f)$:
\begin{equation}
\label{eq:maintail}
\begin{split}
(1-s)&\int_{B_{R/4}} w_-(x) \left( \int_{\R^d} f' \left( \frac{w_+(y)}{\vert x-y \vert^s} \right)\vert x-y \vert^{-d-s} \dy \right) \dx\\
&\le c \int_{B_{R/2}} f \left( \frac{w_-(y)}{R^s}\right) \dy + c (1-s) \Vert w_-\Vert_{L^1(B_{R/2})}\int_{B_{R/2}^c} f' \left( \frac{w_-(y)}{|y|^s} \right) |y|^{-d-s} \dy.
\end{split}
\end{equation}
Note that due to \eqref{eq:pq-lower} it holds $f'(0) = 0$. This allows us to consider the integral over $\R^d$ for the term on the left-hand side.
Since $|x-y| \le 2|y|$ for every $x \in B_{R/4}$, $y \in B_{R/2}^c$ and by \eqref{eq:der-subadd}, we estimate the first term from below by
\begin{align*}
&c(1-s)\int_{B_{R/4}} w_-(x) \left( \int_{B_{R/2}^{c}} f' \left( \frac{w_+(y)}{\vert x-y \vert^s} \right)\vert x-y \vert^{-d-s} \dy \right) \dx \\
&\ge C(1-s)M R^d \int_{B_{R/2}^{c}} f' \left( \frac{u_+(y)}{\vert y \vert^s} \right)\vert y \vert^{-d-s} \dy - C (1-s) M R^d \int_{B_{R/2}^{c}} f' \left( \frac{M}{\vert y \vert^s} \right)\vert y \vert^{-d-s} \dy.
\end{align*}

Note that by monotonicity of $f'$ and \eqref{eq:derivativedoubling1}
\begin{align*}
(1-s) M R^d \int_{B_{R/2}^{c}} f' \left( \frac{M}{\vert y \vert^s} \right)\vert y \vert^{-s-d} \dy \le C M R^{d-s} f' \left( \frac{M}{R^s} \right).
\end{align*}
Furthermore, the terms on the right-hand side of \eqref{eq:maintail} can be estimated from above by:
\begin{equation*}
C M R^{d-s} f' \left( \frac{M}{R^s} \right) + C(1-s) M R^d \int_{B_{R/2}^c} f' \left( \frac{u_-(y)}{|y|^s}  \right) |y|^{-d-s} \dy
\end{equation*}
using similar arguments. Altogether, we obtain
\begin{align*}
&(1-s) \left( \frac{R}{2} \right)^s \int_{B_{R/2}^{c}} f' \left( \frac{u_+(y)}{\vert y \vert^s} \right) |y|^{-d-s} \dy \\
&\le C f' \left( \frac{M}{R^s} \right) + C(1-s) \left( \frac{R}{2} \right)^s \int_{B_{R/2}^c} f' \left( \frac{u_-(y)}{|y|^s}  \right) |y|^{-d-s} \dy \\
&\le \frac{1}{2} \left[ f' \left( \frac{CM}{R^s} \right) + C(1-s) \left( \frac{R}{2} \right)^s \int_{B_{R/2}^c} f' \left( \frac{u_-(y)}{|y|^s}  \right) |y|^{-d-s} \dy \right],
\end{align*}
where we used \eqref{eq:derivativedoubling1} in the last step. Next, we apply $(f')^{-1}$ on both sides of the estimate and multiply with $(R/2)^s$ to obtain
\begin{align*}
\mathrm{Tail}_{f'}(u_+;0,R/2) \le C M + C \mathrm{Tail}_{f'}(u_-;0,R/2),
\end{align*}
where we applied \Cref{lem:fprimeinvadd}, \eqref{eq:finv-f} and \Cref{lem:fprimeinvdoubling}.
\end{proof}

We are now ready to give the proof of \Cref{thm:harnack}.

\begin{proof} [Proof of \Cref{thm:harnack}]
We may assume that $x_0=0$. Let $\varepsilon \in (0,1)$ be the constant from \Cref{thm:inf}. By \Cref{cor:locbdd-e} and \Cref{lem:tail}, we have
\begin{equation*}
f^{\varepsilon} \left( \sup_{B_{R/2}} \frac{u}{R^s} \right) \leq C \delta^{-\mu} \fint_{B_{R}} f^{\varepsilon} \left( \frac{u(x)}{R^s} \right) \dx + \delta f^{\varepsilon} \left( C \frac{\sup_{B_{R/2}}u + \mathrm{Tail}_{f'}(u_-; 0, R/2)}{(R/2)^s} \right),
\end{equation*}
where $\mu = 2d(q-1)/(\varepsilon s_0)$. Using \eqref{eq:pq-upper}, \Cref{lem:upper} and $u\geq 0$ in $B_R$, we obtain
\begin{equation*}
\delta f^{\varepsilon} \left( C \frac{\sup_{B_{R/2}}u + \mathrm{Tail}_{f'}(u_-; 0, R/2)}{(R/2)^s} \right) \leq C\delta \left( f^{\varepsilon} \left( \sup_{B_{R/2}} \frac{u}{R^s} \right) + f^{\varepsilon} \left( \frac{\mathrm{Tail}_{f'}(u_-; 0, R)}{R^s} \right) \right).
\end{equation*}
By taking $\delta$ sufficiently small so that $C \delta < 1/2$, we have
\begin{equation*}
f^{\varepsilon} \left( \sup_{B_{R/2}} \frac{u}{R^s} \right) \leq C \fint_{B_{R}} f^{\varepsilon} \left( \frac{u(x)}{R^s} \right) \dx + f^{\varepsilon} \left( \frac{\mathrm{Tail}_{f'}(u_-; 0, R)}{R^s} \right).
\end{equation*}
Thus, it follows from \Cref{thm:inf} that
\begin{equation*}
f^{\varepsilon} \left( \sup_{B_{R/2}} \frac{u}{R^s} \right) \leq C f^{\varepsilon} \left( \inf_{B_{R/2}} \frac{u}{R^s} \right) + Cf^{\varepsilon} \left( \frac{\mathrm{Tail}_{f'}(u_-; 0, R)}{R^s} \right).
\end{equation*}
The desired inequality follows by using $(a+b)^{\frac{1}{\varepsilon}} \leq 2^{\frac{1}{\varepsilon}-1}(a^{\frac{1}{\varepsilon}} + b^{\frac{1}{\varepsilon}})$, as well as the estimate $f(a) + f(b) \le 2 f(a+b)$ and \Cref{lem:g-inv}.
\end{proof}


\end{document}